\newtheorem{theorem}{Theorem}
\newtheorem{condition}[theorem]{Condition}
\newtheorem{corollary}[theorem]{Corollary}
\newtheorem{definition}[theorem]{Definition}
\newtheorem{lemma}[theorem]{Lemma}
\newtheorem{proposition}[theorem]{Proposition}
\newtheorem{remark}[theorem]{Remark}
\newenvironment{proof}[1][Proof]{\noindent \textbf{#1.} }{\  \rule{0.5em}{0.5em}}
\begin{document}

\title{Existence and stability results on a class of Non Linear Schr\"{o}%
dinger Equations in bounded domains with Dirichlet boundary conditions}
\author{Marco Ghimenti$^{1}$, Dimitrios Kandilakis$^{2}$, Manolis
Magiropoulos$^{3}$ \\
\\
$^{1}$Dipartimento di Matematica,\\
Universit\`{a} di Pisa,\\
Largo Buonarroti 1/c,\\
56127 Pisa, Italy \\
ghimenti@mail.dm.unipi.it\\
\newline
\\
$^{2}$School of Architectural Engineering\\
Techical University of Crete,\\
73100 Chania, Greece\\
dimkand@gmail.com\qquad \\
\newline
\\
$^{3}$Technological Educational Institute of Crete\\
Department of Electrical Engineering\\
71500 Heraklion, Crete, Greece\\
mageir@staff.teicrete.gr}
\date{}
\maketitle

\begin{abstract}
Existence of solution and $L^{2}$and $H^{1}$localization results on a class
of Non Linear Schr\"{o}dinger type equations with a bounded nonlinearity are
obtained, for a bounded domain and with Dirichlet boundary conditions. The
kind of stability under discussion shows that the corresponding solution
exhibits features of a solitary wave type.

Keywords: Non Linear Schr\"{o}dinger Equation, stability of solutions,
solitary wave.

2010 Mathematics Subject Classification: 35Q55, 37K45.

The first author is partial supported by G.N.A.M.P.A. 
\end{abstract}

\section{Introduction}

We study the existence, stability and localization of soliton type solutions
for the Non Linear Schr\"{o}dinger Equation (briefly, NLSE) in the
semiclassical limit (that is for $h\rightarrow 0^{+}$), for a bounded domain
with Dirichlet boundary conditions.

In our framework, the problem takes the form%
\begin{equation*}
ih\frac{\partial \psi }{\partial t}=-\frac{h^{2}}{2}\Delta \psi +\frac{1}{%
2h^{\alpha }}W^{\prime }(\left \vert \psi \right \vert )\frac{\psi }{\left
\vert \psi \right \vert }+V(x)\psi \text{ , }\psi \in C^{1}(%
\mathbb{R}
_{0}^{+},H_{0}^{1}(\Omega ,%
\mathbb{C}
))
\end{equation*}%
\begin{equation}
\psi (0,x)=\phi _{h}(x)\text{, }x\in \Omega \text{,}  \label{h}
\end{equation}

\begin{equation*}
\psi (t,x)=0\text{ on }%
\mathbb{R}
_{0}^{+}\times \partial \Omega \text{, }
\end{equation*}%
$\Omega \subset 
\mathbb{R}
^{N}$ being open and bounded, $N\geqq 3$, $\alpha >0$, where $\phi
_{h}(x)\in $ $H_{0}^{1}(\Omega )$ is a suitable initial datum, and $V$ is an
external potential. Conditions for the nonlinear term $W$ and the potential $%
V$ are to be precised and discussed in the following sections.

The NLSE in the presence of a potential is largely present in literature. In
particular, it has been extensively studied the effect of the potential $V$
on the existence and the profile of a stationary solution, that is a
solution of the form $\psi (t,x)=U(x)e^{-\frac{i}{h}\omega t}$, $\omega
=\lambda /2$, where $U$ solves the equation%
\begin{equation*}
-h^{2}\Delta U+\frac{1}{h^{\alpha }}W^{\prime }(U)+2V(x)U=\lambda U\text{.}
\end{equation*}%
The first attempt to this direction is the work of Floer and Weinstein \cite%
{F-W}, for the one dimensional cubic NLSE (with a generalization for higher
dimensions and different nonlinearity in \cite{O}) where, by means of a
Lyapunov - Schmidt reduction, it is proved that, if $V$ has a non degenerate
minimum, then a stationary solution exists, and this solution has a peak
located at this minimum. Del, Pino and Felmer \cite{DP-F} showed that any
(possibly degenerate) minimum of $V$ generates a stationary solution. We
also mention \cite{A-B-C, Li}, in which similar results are obtained with
different techniques.Concerning global methods, in \cite{Ra} Rabinowitz
proved the existence of a stationary solution with a Mountain Pass argument.
Later, Cingolani and Lazzo \cite{Ci-La} proved that the Lusternik -
Schnirelmann category of the minimal level of $V$ gives a lower bound for
the number of stationary solutions. The topological approach was also
adopted in \cite{Am-Ma-Se}, where a more refined topological invariant is
used, and in \cite{Ben-Gri-Mich}, where the presence of a negative potential
allows the existence of a solution in the so called ``zero mass" case.

Another interesting feature is the influence of the domain in the stationary
NLSE, when $V=0$. In this case, a single-peaked solution can be constructed.
In \cite{Ni-Wei}, Ni and Wei showed that the least energy solution for the
equation 
\begin{equation}
-h^{2}\Delta u+u=u^{p}\text{, }u>0\text{ in }\Omega \text{,}  \label{1}
\end{equation}%
with homogeneous Dirichlet boundary condition, has a unique peak, located at
a point $P_{h}$ with $d(P_{h},\partial \Omega )\underset{h\rightarrow 0^{+}}{%
\rightarrow }\max_{\Omega }d(P,\partial \Omega )$. Later, Wei \cite{Wei}
proved a result that can be viewed as the converse of the forementioned
theorem. Namely, the author showed that for any local maximum $P$ of the
distance from the boundary $\partial \Omega $, one can construct a
single-peaked solution of (\ref{1}) whose peak tends to $P$ as $h\rightarrow
0^{+}$. The profile of the solution is, up to rescaling, close to the
profile of the ground state solution of the limit problem 
\begin{equation*}
-\Delta u+u=u^{p}\text{, }u>0\text{, }
\end{equation*}%
in the whole of $%
\mathbb{R}
^{N}$. We also mention \cite{Ca-Da-Nou-Yan} in which the existence of a
multi-peaked solution of (\ref{1}) is proved.

In the present work we follow a different approach, incorporating and
exploiting ideas found in \cite{Bel-B-G-M, B-G-M, Ben-Ghi-Mich}, where the
problem (\ref{h}) had been studied for the whole of $%
\mathbb{R}
^{N}$ for both cases: $V=0$ (existence and stability), and $V\neq 0$
(existence, stability and dynamics).

We want to point out two main differences between this paper and \cite%
{Bel-B-G-M}. First, dealing with a bounded domain and a bounded nonlinearity
gives us enough compactness to easily prove an orbital stability result. In
particular our result is also true for positive nonlinearities, which is
forbidden in the whole space by Pohozaev's Theorem.

This difference is less evident in what comes after, since, when dealing
with the semiclassical limit, we have to face a limit problem in the whole
space $\mathbb{R}^{N}$, which is the same of \cite{Bel-B-G-M}, so we have to
reintroduce some hypothesis.

However, our orbital stability result could be used when studying other
situations, e.g. fixing $h$ and looking for solitons of prescribed, possibly
large, $L^{2}$ norm (the so called large solitons). If one is interested in
these topics, we recommend the nice paper by Noris, Tavares, Verzini \cite%
{Nor-Tav-Ver}, which deals with orbital stability for solitons with
prescribed $L^{2}$ norms in bounded domains, and the references therein. We
point out that in \cite{Nor-Tav-Ver}, the authors work with pure power
nonlinearities, so they can have some compactness loss -for $L^{2}$-critical
and supercritical powers- even if the domain is bounded. In particular, for
critical and supercritical powers they have orbital stability only for small 
$L^{2}$ norms.

The second point in which the bounded domain marks a difference with the
paper \cite{Bel-B-G-M}, is pointed out in the Appendix. In fact, when trying
to describe dynamics, immediately it appears a repulsive effect of the
boundary. The soliton is affected by a force oriented with the inward normal
to the boundary. Unfortunately, the value of the force depends on the $%
D^{1,2}$ norm of the solution on the boundary, so, at the moment, we were
not able to give a quantitative estimate of this repulsive force, and
further efforts are needed.

According to this line of thought, we have divided the present work into
three sections and an appendix:

In Section 2, existence and orbital stability results are obtained for the
case $V=0$, by referring to the related eigenvalue problem

\begin{eqnarray}
-\Delta U+W^{\prime }(U) &=&\lambda U\text{, in }\Omega  \notag \\
&&  \label{s} \\
U &\equiv &0\text{, on }\partial \Omega \text{,}  \notag
\end{eqnarray}%
given that a solution $U(x)$ in $H_{0}^{1}(\Omega )$ of (\ref{s}) results to
a solution $\psi =U(x)e^{-\frac{i\lambda t}{2}}$ of (\ref{h}), with initial
condition $\psi (0,x)=U(x)$. These results are summarized in Proposition 6.
One should notice that the relative proofs work without having to impose the
usual restriction $2<p<2+\frac{4}{N}$, by relaxing the restriction to $%
2<p<2^{\ast }=\frac{2N}{N-2}$ instead. It is the boundedness of the domain
that allows us in doing so.

In Section 3, where we assume the presence of an external potential, our
basic result, obtained by means of a rescaling procedure, is to prove $L^{2}$
localization in the sense that if we start with an initial datum close to a
ground state solution $U$ of 
\begin{eqnarray}
-h\Delta U+\frac{1}{h^{\alpha +1}}W^{\prime }(U) &=&\frac{\omega }{h^{\alpha
+1}}U\text{, in }\Omega  \notag \\
&& \\
U &\equiv &0\text{, on }\partial \Omega \text{,}  \notag
\end{eqnarray}%
the corresponding solution of (\ref{h}) will keep its $L^{2}$ profile along
the motion, provided that $h$ is sufficiently small. Here and in what
follows, the restriction $2<p<2+\frac{4}{N}$ is imposed, since we need to
face a limit problem in $%
\mathbb{R}
^{N}$.

In Section 4, an $H^{1}$ modular localization result is obtained for the
case $V\neq 0$, and for both cases: the unbounded and the bounded one. When
we work on the whole of $%
\mathbb{R}
^{N}$, we start with a ground state solution $U_{1}$ of the $%
\mathbb{R}
^{N}$ counterpart of (\ref{s}), proving that a solution of the $%
\mathbb{R}
^{N}$ counterpart of (\ref{h}), with initial condition close to $U_{1}$,
preserves its basic modular $H^{1}$ profile as time passes, in the sense
that given $\varepsilon >0$, for all $t\geq 0$, the ratio of the squared $%
L^{2}$ norm of $\left \vert \nabla u_{h}(t,x)\right \vert $ with respect to
the complement of a suitable open ball over the squared $L^{2}$ norm of $%
\left \vert \nabla u_{h}(t,x)\right \vert $ with respect to the whole of $%
\mathbb{R}
^{N}$ is less than $\varepsilon $ for $h$ sufficiently small, where $%
u_{h}(t,x)$ is taken by the polar expression of $\psi (t,x)$, namely $\psi
(t,x)=$ $u_{h}(t,x)e^{is_{h}(t,x)}$. The bounded case is treated by
exploiting ideas developed for the $L^{2}$ problem (Section 3).

Finally, and as we mentioned before, in the Appedix it is described an
attempt to study dynamics in the frame of a bounded domain, where we
encountered difficulties due to computational complications related to the
action of $\nabla V$ on the motion as well as to the repulsive effect of the
boundary.

\section{ The case $V=0$}

\subsection{Existence}

For simplicity of the exposition we assume $h=1$. As it has been already
said, the case $V=0$ is related to problem (\ref{s}), and a solution $u(x)$
in $H_{0}^{1}(\Omega )$ of (\ref{s}) results to a solution $\psi =u(x)e^{-%
\frac{i\lambda t}{2}}$ of (\ref{h}) with initial condition $\psi (0,x)=u(x)$.

Notice that a minimizer of 
\begin{equation*}
J(u)=\int \nolimits_{\Omega }\left( \frac{1}{2}\left \vert \nabla u\right
\vert ^{2}+W(u)\right) dx
\end{equation*}%
on $S_{\sigma }=\left \{ u\in H_{0}^{1}(\Omega ):\left \Vert u\right \Vert
_{L^{2}(\Omega )}=\sigma \right \} $, for some fixed $\sigma >0$, is a
solution of (\ref{s}), for suitable $\lambda $. Thus we focus on the
existence of such a minimizer. We impose on $W$ the following conditions:

\begin{condition}
$W$ is a $C^{1}$, bounded and even map $%
\mathbb{R}
\rightarrow 
\mathbb{R}
$.
\end{condition}

\begin{condition}
\bigskip $\left \vert W^{\prime }(s)\right \vert \leq c|s|^{p-1}$, $%
2<p<2^{\ast }=\frac{2N}{N-2}$, where $c$ is a suitable positive constant.
\end{condition}

\begin{remark}
\textit{Although it is intuitively quite clear the construction of such
maps, an easy concrete example is furnished by choosing }$W=\sin (s^{p})$%
\textit{, for }$p\geq 0$\textit{, and evenly expanding it on the whole of }$%
\mathbb{R}
$\textit{. We also stress the fact that a bounded }$W$\textit{\ is related
to the global well posedness results by Cazenave.}
\end{remark}

Notice that 
\begin{equation}
-\infty <\mu =\underset{u\in S_{\sigma }}{\inf }J(u)\text{.}  \label{s1}
\end{equation}%
If $\left \{ u_{n}\right \} $ is a minimizing sequence in $S_{\sigma }$ for $%
J(u)$, that is $J(u_{n})\rightarrow \mu $, it is evident that $\left \{
u_{n}\right \} $ is bounded in $H_{0}^{1}(\Omega )$, thus, up to a
subsequence, $u_{n}\rightharpoonup \overline{u}\in H_{0}^{1}(\Omega )$, and $%
u_{n}\rightarrow \overline{u}$ in $L^{2}(\Omega )$. The latter implies $%
\left \Vert u_{n}\right \Vert _{L^{2}(\Omega )}\rightarrow \left \Vert 
\overline{u}\right \Vert _{L^{2}(\Omega )}$, thus $\overline{u}\in S_{\sigma
} $.

Next, we obtain a similar result to Proposition 11 in \cite{Bel-B-G-M},

\begin{proposition}
If $\left \{ w_{n}\right \} $ is a minimizing sequence in $S_{\sigma }$ for $%
J$, that is $J(w_{n})\rightarrow \mu $, satisfying the constrained P - S
condition, that is, there exists a real sequence $\lambda _{n}$ of Lagrange
multipliers such that%
\begin{equation}
-\Delta w_{n}+W^{\prime }(w_{n})-\lambda _{n}w_{n}=\sigma _{n}\rightarrow 0%
\text{,}  \label{s2}
\end{equation}%
then $\lambda _{n}$ is bounded.
\end{proposition}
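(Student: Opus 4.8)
The plan is to test the equation \eqref{s2} against $w_n$ itself and solve for $\lambda_n$, then bound each resulting term using the a priori bounds already established for a minimizing sequence together with Conditions 1 and 2.

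First I would pair \eqref{s2} with $w_n\in H_0^1(\Omega)$ in the $L^2$ duality. This yields
\begin{equation*}
\int_\Omega |\nabla w_n|^2\,dx + \int_\Omega W'(w_n)\,w_n\,dx - \lambda_n\|w_n\|_{L^2(\Omega)}^2 = \langle \sigma_n, w_n\rangle .
\end{equation*}
Since $w_n\in S_\sigma$ we have $\|w_n\|_{L^2(\Omega)}^2=\sigma^2$, so this can be solved for $\lambda_n$:
\begin{equation*}
\lambda_n = \frac{1}{\sigma^2}\left( \int_\Omega |\nabla w_n|^2\,dx + \int_\Omega W'(w_n)\,w_n\,dx - \langle \sigma_n, w_n\rangle \right).
\end{equation*}
It therefore suffices to bound the three terms on the right. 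The first term is controlled because $\{w_n\}$ is a minimizing sequence for $J$: from $J(w_n)\to\mu$ and the boundedness of $W$ (Condition 1), $\int_\Omega |\nabla w_n|^2 = 2J(w_n) - 2\int_\Omega W(w_n)\,dx \le 2\mu + o(1) + 2\|W\|_\infty |\Omega|$, so $\{w_n\}$ is bounded in $H_0^1(\Omega)$. The second term is bounded by Condition 2 and the Sobolev embedding: $\big|\int_\Omega W'(w_n)w_n\big| \le c\int_\Omega |w_n|^p \le c\,\|w_n\|_{L^p(\Omega)}^p$, which is finite and bounded since $2<p<2^*$ and $\{w_n\}$ is bounded in $H_0^1$. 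For the third term, $|\langle \sigma_n, w_n\rangle| \le \|\sigma_n\|_{H^{-1}(\Omega)}\|w_n\|_{H_0^1(\Omega)}$, and $\sigma_n\to 0$ in $H^{-1}(\Omega)$ while $\|w_n\|_{H_0^1}$ stays bounded, so this term is in fact $o(1)$. Combining the three estimates gives a uniform bound on $|\lambda_n|$.

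The only point requiring a little care is the sense in which $\sigma_n\to 0$: one must read the constrained Palais--Smale condition as asserting convergence in the dual norm $H^{-1}(\Omega)$ (equivalently, $H_0^1(\Omega)'$), which is the natural pairing in which the left-hand side of \eqref{s2} is a bounded functional; with that reading the pairing $\langle\sigma_n,w_n\rangle$ is legitimate and tends to $0$. Everything else is a routine application of the a priori bounds, so there is no genuine obstacle here — the statement is essentially a bookkeeping consequence of boundedness of the minimizing sequence plus the growth hypotheses on $W$.
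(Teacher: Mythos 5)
Your proposal is correct and follows essentially the same route as the paper: both test \eqref{s2} against $w_{n}$, use $\left\Vert w_{n}\right\Vert _{L^{2}(\Omega )}^{2}=\sigma ^{2}$ to isolate $\lambda _{n}$, and bound the remaining terms via the $H_{0}^{1}$ bound on the minimizing sequence, the boundedness of $W$ (Condition 1), the growth bound on $W^{\prime }$ (Condition 2), and the dual-norm pairing with $\sigma _{n}$. The paper merely packages $\int_{\Omega }\left\vert \nabla w_{n}\right\vert ^{2}dx+2\int_{\Omega }W(w_{n})dx$ as $2J(w_{n})$ before estimating, which is the same algebra arranged slightly differently.
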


\begin{proof}
\bigskip Since, as we saw above, $w_{n}$ is bounded in $H_{0}^{1}(\Omega )$,
(\ref{s2}) implies%
\begin{equation*}
\left \vert \int \nolimits_{\Omega }\left( \left \vert \nabla w_{n}\right
\vert ^{2}+W^{\prime }(w_{n})w_{n}-\lambda _{n}w_{n}^{2}\right) dx\right
\vert \leq \left \Vert \sigma _{n}\right \Vert _{\ast }\left \Vert
w_{n}\right \Vert _{H_{0}^{1}(\Omega )}\rightarrow 0\text{,}
\end{equation*}%
where by $\left \Vert \cdot \right \Vert _{\ast }$ is denoted the dual norm
for $H_{0}^{1}(\Omega )$. We have 
\begin{eqnarray*}
\int \nolimits_{\Omega }\left( \left \vert \nabla w_{n}\right \vert
^{2}+W^{\prime }(w_{n})w_{n}-\lambda _{n}w_{n}^{2}\right) dx &=& \\
\int \nolimits_{\Omega }\left( \left \vert \nabla w_{n}\right \vert
^{2}+2W(w_{n})-2W(w_{n})+W^{\prime }(w_{n})w_{n}-\lambda
_{n}w_{n}^{2}\right) dx &=& \\
2J(w_{n})-\lambda _{n}\sigma ^{2}+\int \nolimits_{\Omega }\left( W^{\prime
}(w_{n})w_{n}-2W(w_{n})\right) dx &\rightarrow &0.
\end{eqnarray*}%
Notice that $J(w_{n})$ is bounded, and because of Condition 2, 
\begin{eqnarray*}
\left \vert \int \nolimits_{\Omega }\left( W^{\prime
}(w_{n})w_{n}-2W(w_{n})\right) dx\right \vert &\leq & \\
\int \nolimits_{\Omega }\left \vert W^{\prime }(w_{n})w_{n}\right \vert
dx+2\int \nolimits_{\Omega }\left \vert W(w_{n})\right \vert dx &\leq & \\
c_{1}\left \Vert w_{n}\right \Vert _{H_{0}^{1}(\Omega )}^{p}+2kmeas\left(
\Omega \right) &<&+\infty \text{,}
\end{eqnarray*}%
where $k$ is an upper bound of $|W|$. Thus $\lambda _{n}$ is bounded.
\end{proof}

By Ekeland's principle, if $\left \{ u_{n}\right \} $ is a minimizing
sequence in $S_{\sigma }$ for $J(u)$, we may assume that it satisfies the\
constrained P - S condition, that is, there exists a real sequence $\lambda
_{n}$ so that (\ref{s2}) holds. Because of Proposition 4, $\lambda _{n}$ is
bounded, and the following hold%
\begin{eqnarray*}
\lambda _{n} &\rightarrow &\lambda \\
u_{n} &\rightharpoonup &u\text{ in }H_{0}^{1}(\Omega ) \\
u_{n} &\rightarrow &u\text{ in }L^{p}(\Omega )\text{ for }1\leq p<2^{\ast }%
\text{.}
\end{eqnarray*}%
We have already shown that $u\in S_{\sigma }$. Thus, $u\neq 0$. Next, we
show that%
\begin{equation}
-\Delta u+W^{\prime }(u)=\lambda u\text{.}  \label{s3}
\end{equation}%
To this end, if $\varphi $ is a test function, combining the three
considerations above with Condition 2, we have 
\begin{eqnarray*}
\int \nolimits_{\Omega }\nabla u_{n}\nabla \varphi dx &\rightarrow &\int
\nolimits_{\Omega }\nabla u\nabla \varphi dx \\
\int \nolimits_{\Omega }W^{\prime }(u_{n})\varphi dx &\rightarrow &\int
\nolimits_{\Omega }W^{\prime }(u)\varphi dx \\
\lambda _{n}\int \nolimits_{\Omega }u_{n}\varphi dx &\rightarrow &\lambda
\int \nolimits_{\Omega }u\varphi dx\text{,}
\end{eqnarray*}%
implying (\ref{s3}).

\bigskip Notice next that due to Condition 2, the Nemytskii operator%
\begin{equation*}
W:L^{t}\left( \Omega \right) \rightarrow L^{1}\left( \Omega \right) \text{, }%
2<t<2^{\ast }\text{,}
\end{equation*}%
is continuous, whereas $u_{n}\rightarrow u$ in $L^{t}(\Omega )$, for $%
2<t<2^{\ast }$. Thus,%
\begin{equation*}
\mu \leq J(u)=\frac{1}{2}\int \nolimits_{\Omega }\left \vert \nabla u\right
\vert ^{2}dx+\int \nolimits_{\Omega }W(u)dx\leq \underset{n\rightarrow
\infty }{\lim }J(u_{n})=\mu \text{,}
\end{equation*}%
proving that $J(u)=\mu $.

This completes the proof for the existence of a non trivial solution of (\ref%
{s}), for suitable $\lambda $. In fact, the weak convergence $%
u_{n}\rightharpoonup u$ turns out to be a strong one: Since $J(u_{n})-J(u)$, 
$\int \nolimits_{\Omega }(W(u_{n})-W(u))dx\rightarrow 0$, we obtain $\left
\Vert u_{n}\right \Vert _{H_{0}^{1}(\Omega )}\underset{}{\rightarrow }\left
\Vert u\right \Vert _{H_{0}^{1}(\Omega )}$, thus proving that $%
u_{n}\rightarrow u$ in $H_{0}^{1}(\Omega )$. Since $W$ has been assumed
even, we may take a non trivial nonnegative solution of (\ref{s}). By
Harnack's inequality, this solution is strictly positive on $\Omega $. We
thus obtain a positive solution $\overline{u}\in S_{\sigma }$ for problem (%
\ref{s}), for suitable $\lambda $. The wave function 
\begin{equation}
\psi (t,x)=\overline{u}(x)e^{^{-i\omega t}}\text{, }\omega =\lambda /2
\end{equation}%
is a stationary solution of (\ref{h}), for $h=1$, $V\equiv 0$, with initial
condition $\phi (x)=\psi (0,x)=\overline{u}(x)$. Evidently, $-\overline{u}%
(x)e^{-i\omega t}$, $\omega =\lambda /2$, is a stationary solution of (\ref%
{h}), too.

\subsection{Stability}

We turn next our attention to the stability of the stationary solution. To
this end, we focus on the reduced form of (\ref{h}), 
\begin{equation*}
2i\frac{\partial \psi }{\partial t}=-\Delta \psi +W^{\prime }(\left \vert
\psi \right \vert )\frac{\psi }{\left \vert \psi \right \vert }\text{ in }%
\mathbb{R}
_{0}^{+}\times \Omega \text{,}
\end{equation*}%
\begin{equation}
\psi (0,x)=\phi (x)\text{,}  \label{15}
\end{equation}%
\begin{equation*}
\psi (t,x)=0\text{ on }%
\mathbb{R}
_{0}^{+}\times \partial \Omega \text{, }
\end{equation*}%
by taking, as it was mentioned above, $h=1$. The different time slices $\psi
_{t}(x)$ of each solution of (\ref{15}), where such a solution may be
understood as the time evolution of some initial condition $\psi _{0}(x)$,
could be thought of as elements of a proper phase space $X\subset
L^{2}(\Omega ,%
\mathbb{C}
)$, with the set 
\begin{equation}
\Gamma =\left \{ u(x)e^{i\theta }\text{, }\theta \in 
\mathbb{R}
/2\pi 
\mathbb{Z}
\text{, }u\in S_{\sigma }\text{, }J(u)=\mu =\underset{w\in S_{\sigma }}{\inf 
}J(w)\right \}
\end{equation}%
being an invariant (under evolution) manifold of $X$. Evidently, $\pm 
\overline{u}(x)\in \Gamma $.

To make the description of all this more clear, one should notice that if $%
\psi _{t_{0}}(x)$ is a time slice of a solution $\psi (t,x)$ of (\ref{15}),
the evolution map is defined by%
\begin{equation*}
U_{t}\psi _{t_{0}}(x)=\psi _{t_{0}+t}(x)\text{,}
\end{equation*}%
meaning that this time slice might be considered as the initial condition of
the solution $\psi _{1}(t,x)=\psi (t+t_{0},x)$. Now, if $u(x)e^{i\theta }\in
\Gamma $, then $u$ is a solution of (\ref{s}), with suitable $\lambda $,
and, at the same time, $u(x)e^{i\theta }$ is the initial condition of the
solution $\psi (t,x)=u(x)e^{i(\theta -\lambda t/2)}$ of (\ref{15}). Since $%
u(x)e^{i(\theta -\lambda t/2)}\in \Gamma $ for each $t\geqslant 0$, the
invariance of $\Gamma $ follows. We are going to prove \textit{orbital}
stability of $\overline{u}(x)e^{^{-i\omega t}}$, $\omega =\lambda /2$,
following the definition of orbital stability found in \cite{Caz-Lions},
meaning that $\Gamma $ is stable in the following sense:%
\begin{equation*}
\forall \varepsilon >0\text{, }\exists \text{ }\delta >0\text{ such that if }%
\psi (t,x)\text{ is a solution of (\ref{15}) satisfying}
\end{equation*}

$\  \  \  \  \  \  \  \  \  \  \  \  \  \  \  \  \  \ $ $\  \  \  \  \  \  \  \  \  \  \  \  \  \  \  \  \  \
\  \  \  \  \  \  \  \  \ $%
\begin{equation}
\underset{\widetilde{w}\in \Gamma }{\inf }\left \Vert \psi (0,x)-\widetilde{w%
}\right \Vert _{H_{0}^{1}(\Omega )}<\delta \text{, then }\forall t\geqq 0%
\underset{\widetilde{w}\in \Gamma }{\inf }\left \Vert \psi (t,x)-\widetilde{w%
}\right \Vert _{H_{0}^{1}(\Omega )}<\varepsilon \text{.}  \label{r}
\end{equation}

The name \textit{orbital }could be misleading in the present framework: our
nonlinearity does not ensure the uniqueness of the ground state. Moreover,
the problem is not invariant under translations. Thus it could be that the
set $\Gamma $ reduces to a single function or to \ a discrete set of ground
states (up to multiplication by a unitary complex number). Anyhow, we follow
the approach by \cite{Caz-Lions} and by \cite{Bel-B-G-M}, so we will keep
the term orbital stability. In Lemmas 21 and 22, we will see how the orbital
stability result leads to a precise description of the modulus of a solution 
$\psi (t,x)$ starting from a suitable initial datum.

Notice that $\Gamma $ is bounded in $H_{0}^{1}(\Omega )$, since for each of
its elements $\widetilde{w}=w(x)e^{i\theta }$, $w(x)$ is a constrained
minimizer of $J$, whereas $W$ is bounded. Notice that we may take $w(x)>0$.

Suppose $\Gamma $ is not stable. Then $\exists $ $\varepsilon >0$, and
sequences $\delta _{n}\rightarrow 0^{+}$, $\psi _{n}(t,x)$ of solutions of (%
\ref{15}), and $t_{n}\geq 0$ such that 
\begin{equation}
\underset{\widetilde{w}\in \Gamma }{\inf }\left \Vert \psi _{n}(0,x)-%
\widetilde{w}\right \Vert _{H_{0}^{1}(\Omega )}<\delta _{n}\text{, }\underset%
{\widetilde{w}\in \Gamma }{\inf }\left \Vert \psi _{n}(t_{n},x)-\widetilde{w}%
\right \Vert _{H_{0}^{1}(\Omega )}\geq \varepsilon \text{.}  \label{19}
\end{equation}%
Notice that the first inequality of (\ref{19}) implies%
\begin{equation*}
\underset{\widetilde{w}\in \Gamma }{\inf }\left \Vert \psi _{n}(0,x)-%
\widetilde{w}\right \Vert _{_{L^{2}(\Omega )}}<C\delta _{n}\rightarrow 0%
\text{,}
\end{equation*}%
where $C$ is the Sobolev constant satisfying $\left \Vert \cdot \right \Vert
_{L^{2}(\Omega )}\leq C\left \Vert \cdot \right \Vert _{H_{0}^{1}(\Omega )}$%
. Thus, we may obtain a sequence $\widetilde{w}_{n}$ in $\Gamma $, such that%
\begin{equation}
\left \Vert \psi _{n}(0,x)-\widetilde{w}_{n}\right \Vert _{_{L^{2}(\Omega
)}}\rightarrow 0\text{.}  \label{20}
\end{equation}%
We express now $\psi _{n}(t,x)$ in polar form, namely $\psi
_{n}(t,x)=u_{n}(t,x)e^{is_{n}(t,x)}$, with $u_{n}(t,x)=$ $|\psi _{n}(t,x)|$, 
$\forall t\geqq 0$. Since $\left \Vert \widetilde{w}_{n}\right \Vert
_{_{L^{2}(\Omega )}}=\sigma $, (\ref{20}) implies that $u_{n}(0,x)$ is
bounded in $L^{2}(\Omega )$, and at least up to a subsequence, still denoted
by $u_{n}(0,x)$, $\left \Vert u_{n}(0,x)\right \Vert _{_{L^{2}(\Omega
)}}\rightarrow M\geq 0$. Rewriting (\ref{20}) in its squared form, and
taking into consideration that 
\begin{equation*}
\int \nolimits_{\Omega }u_{n}(0,x)|w_{n}(x)|dx\leq \left \Vert
u_{n}(0,x)\right \Vert _{_{L^{2}(\Omega )}}\sigma \text{,}
\end{equation*}%
we take%
\begin{equation*}
0\geq M^{2}-2M\sigma +\sigma ^{2}=(M-\sigma )^{2}\text{,}
\end{equation*}%
thus obtaining $M=\sigma $.

The polar form $\psi (t,x)=u(t,x)e^{is(t,x)}$, turns (\ref{15}) into the
system%
\begin{equation*}
-\frac{\Delta u}{2}+\frac{W^{\prime }(u)}{2}+\left( \frac{\partial s}{%
\partial t}+\frac{1}{2}\left \vert \nabla s\right \vert ^{2}\right) u=0
\end{equation*}%
\begin{equation*}
\partial _{t}u^{2}+\nabla \cdot (u^{2}\nabla s)=0\text{, in }%
\mathbb{R}
_{0}^{+}\times \Omega \text{,}
\end{equation*}%
\begin{equation}  \label{21}
\end{equation}%
\begin{equation*}
u(0,x)e^{is(0,x)}=\phi (x)\text{, }u(t,x)=0\text{ on }%
\mathbb{R}
_{0}^{+}\times \partial \Omega \text{, }
\end{equation*}%
with the two equations of (\ref{21}) being the Euler - Lagrange equations of
the action functional%
\begin{equation}
A(u,s)=\frac{1}{4}\iint \left \vert \nabla u\right \vert ^{2}dxdt+\frac{1}{2%
}\iint W(u)dxdt+\frac{1}{2}\iint \left( \frac{\partial s}{\partial t}+%
\frac{1}{2}\left \vert \nabla s\right \vert ^{2}\right) u^{2}dxdt\text{.}
\label{22}
\end{equation}%
The total energy is given by%
\begin{equation}
E(\psi )=E(u,s)=\int \nolimits_{\Omega }\left( \frac{1}{2}\left \vert
\nabla u\right \vert ^{2}+\frac{1}{2}u^{2}\left \vert \nabla s\right \vert
^{2}+W(u)\right) dx\text{,}  \label{23}
\end{equation}%
that is,%
\begin{equation}
E(\psi )=E(u,s)=J(u)+\frac{1}{2}\int \nolimits_{\Omega }u^{2}\left \vert
\nabla s\right \vert ^{2}dx  \label{a}
\end{equation}%
Independence of time for the energy and for the charge imply that for a
solution $\psi (t,x)=u(t,x)e^{is(t,x)}$ of (\ref{15}), it holds%
\begin{equation}
\frac{d}{dt}\int \nolimits_{\Omega }u(t,x)^{2}dx=0  \label{24}
\end{equation}%
\begin{equation}
\frac{d}{dt}E(u,s)=0\text{.}  \label{25}
\end{equation}%
Equivalently, (\ref{24}) and (\ref{25}) can be expressed as%
\begin{equation}
\left \Vert \psi (t,x)\right \Vert _{_{L^{2}(\Omega )}}=\left \Vert \phi
(x)\right \Vert _{_{L^{2}(\Omega )}}  \label{26}
\end{equation}%
\begin{equation}
E(\psi (t,x))=E(\phi (x))  \label{27}
\end{equation}%
for all $t\geq 0$. Noteworthy, for stationary solution, (\ref{a}) yields $%
E(\psi )=J(u)$.

Returning to the sequence $\psi _{n}(t,x)$ satisfying (\ref{19}), we may
assume, as we saw, that $\left \Vert u_{n}(0,x)\right \Vert _{_{L^{2}(\Omega
)}}\rightarrow \sigma $, that is $\left \Vert u_{n}(t,x)\right \Vert
_{_{L^{2}(\Omega )}}\rightarrow \sigma $, for $t\geq 0$, because of (\ref{26}%
). We want to show that $\left \{ u_{n}\right \} _{n}$ is a minimizing
sequence for the functional $J$ on the constraint $\left \Vert u\right \Vert
_{_{L^{2}(\Omega )}}=\sigma $.

One should notice that the first inequality of (\ref{19}), combined with the
boundedness of $\Gamma $ ensure that $\psi _{n}(0,x)$ is bounded in $%
H_{0}^{1}(\Omega )$. Since $W$ is bounded, (\ref{a}) ensures that $E(\psi
_{n}(0,x))$ is bounded, and because of (\ref{27}), $E(\psi _{n}(t,x))$ is
bounded, for all $n$ and all $t\geq 0$. In particular, $E(\psi
_{n}(t_{n},x)) $ is bounded. A new application of (\ref{a}), ensures now
that $u_{n}(t_{n},x)$ is bounded in $H_{0}^{1}(\Omega )$. The sequence $%
\widehat{u}_{n}(t_{n},x)=\alpha _{n}u_{n}(t_{n},x)$, where $\alpha _{n}=%
\frac{\sigma }{\left \Vert u_{n}(t_{n},x)\right \Vert _{_{L^{2}(\Omega )}}}$%
, is in $S_{\sigma }$. We have, writing for simplicity $u_{n}$, $\widehat{u}%
_{n}$ instead of $u_{n}(t_{n},x)$, $\widehat{u}_{n}(t_{n},x)$, respectively,
for suitable $l_{n}=l_{n}(x)\in \left( 0\text{, }1\right) $, and because of
Condition 2, 
\begin{eqnarray*}
\left \vert J(\widehat{u}_{n})-J(u_{n})\right \vert &\leq &\frac{1}{2}%
|\alpha _{n}^{2}-1|\int \nolimits_{\Omega }\left \vert \nabla u_{n}\right
\vert ^{2}dx+\int \nolimits_{\Omega }\left \vert W(\widehat{u}%
_{n})-W(u_{n})\right \vert dx \\
&=&\frac{1}{2}|\alpha _{n}^{2}-1|\int \nolimits_{\Omega }\left \vert \nabla
u_{n}\right \vert ^{2}dx \\
&&+\left \vert \alpha _{n}-1\right \vert \int \nolimits_{\Omega }\left
\vert u_{n}W^{\prime }(l_{n}u_{n}+(1-l_{n})\widehat{u}_{n})\right \vert dx \\
&\leq &\frac{1}{2}|\alpha _{n}^{2}-1|\int \nolimits_{\Omega }\left \vert
\nabla u_{n}\right \vert ^{2}dx \\
&&+\left \vert \alpha _{n}-1\right \vert \left \{ \int \nolimits_{\Omega }c 
\left[ l_{n}+\left( 1-l_{n}\right) \alpha _{n}\right] ^{p-1}\left \vert
u_{n}\right \vert ^{p}dx\right \} \\
&\rightarrow &0\text{,}
\end{eqnarray*}%
since in the right hand side of the last inequality, the two summands are
products of a zero sequence by a bounded one. Thus, $J(\widehat{u}%
_{n})-J(u_{n})\rightarrow 0$. We return now to%
\begin{equation}
\left \Vert \psi _{n}(0,x)-\widetilde{w}_{n}\right \Vert
_{_{H_{0}^{1}(\Omega )}}\rightarrow 0\text{,}  \label{28}
\end{equation}%
which, as a result of the triangle inequality combined with the boundedness
of $\left \Vert \psi _{n}(0,x)\right \Vert _{_{_{H_{0}^{1}(\Omega )}}}+\left
\Vert w_{n}(x)\right \Vert _{_{_{H_{0}^{1}(\Omega )}}}$, readily gives%
\begin{equation*}
\left \Vert \psi _{n}(0,x)\right \Vert _{_{_{H_{0}^{1}(\Omega )}}}^{2}-\left
\Vert w_{n}(x)\right \Vert _{_{_{H_{0}^{1}(\Omega )}}}^{2}\rightarrow 0\text{%
,}
\end{equation*}%
that is, 
\begin{equation}
\int \nolimits_{\Omega }\left[ \left \vert \nabla u_{n}(0,x)\right \vert
^{2}+u_{n}^{2}(0,x)\left \vert \nabla s_{n}(0,x)\right \vert ^{2}-\left
\vert \nabla w_{n}(x)\right \vert ^{2}\right] dx\rightarrow 0\text{.}
\label{c}
\end{equation}%
We claim that 
\begin{equation}
\int \nolimits_{\Omega }u_{n}^{2}(0,x)\left \vert \nabla s_{n}(0,x)\right
\vert ^{2}dx\rightarrow 0\text{.}  \label{d}
\end{equation}%
If not so, up to a subsequence, 
\begin{equation}
\int \nolimits_{\Omega }\left[ \left \vert \nabla u_{n}(0,x)\right \vert
^{2}-\left \vert \nabla w_{n}(x)\right \vert ^{2}\right] dx\rightarrow k<0%
\text{.}  \label{e}
\end{equation}%
Combining $L^{1}$ convergence of $u_{n}(0,x)-w_{n}(x)$ to $0$, with
Condition 2, we have%
\begin{equation}
\int \nolimits_{\Omega }\left[ W(u_{n}(0,x))-W\left( w_{n}(x)\right) \right]
\rightarrow 0\text{.}  \label{f}
\end{equation}%
Now (\ref{e}) and (\ref{f}) give 
\begin{equation}
J\left( u_{n}(0,x)\right) -J\left( w_{n}(x)\right) \rightarrow k/2<0\text{.}
\label{g}
\end{equation}%
However, as we have shown above, 
\begin{equation}
J(\widehat{u}_{n}(0,x))-J(u_{n}(0,x))\rightarrow 0\text{,}  \label{29}
\end{equation}%
thus obtaining%
\begin{equation}
J(\widehat{u}_{n}(0,x))-J\left( w_{n}(x)\right) \rightarrow k/2<0\text{,}
\label{30}
\end{equation}%
an absurdity, since $\widehat{u}_{n}(0,x)\in S_{\sigma }$, and $w_{n}(x)$ is
a $S_{\sigma }$ minimizer of $J$. Thus (\ref{d}) holds, and because of (\ref%
{c}) and (\ref{f}), we get%
\begin{equation*}
J(u_{n}(0,x))\rightarrow \mu \text{.}
\end{equation*}%
Thus, 
\begin{equation*}
E(\psi _{n}(0,x))-E(\widetilde{w}_{n})\rightarrow 0\text{,}
\end{equation*}%
and by (\ref{27}) we have%
\begin{equation}
E(\psi _{n}(t_{n},x))-E(\widetilde{w}_{n})\rightarrow 0\text{.}  \label{32}
\end{equation}%
From (\ref{32}) we obtain%
\begin{equation}
\int \nolimits_{\Omega }u_{n}^{2}(t_{n},x)\left \vert \nabla
s_{n}(t_{n},x)\right \vert ^{2}dx\rightarrow 0\text{.}  \label{33}
\end{equation}%
To see this, let us assume that this is not the case. Then, up to a
subsequence,%
\begin{equation*}
\int \nolimits_{\Omega }u_{n}^{2}(t_{n},x)\left \vert \nabla
s_{n}(t_{n},x)\right \vert ^{2}dx\rightarrow \rho >0\text{.}
\end{equation*}%
Then (\ref{32}) implies that 
\begin{equation*}
J\left( u_{n}(t_{n},x)\right) -J\left( w_{n}(x)\right) \rightarrow -\rho <0.
\end{equation*}%
Since%
\begin{equation*}
J(\widehat{u}_{n}(t_{n},x))-J(u_{n}(t_{n},x))\rightarrow 0\text{,}
\end{equation*}%
we obtain%
\begin{equation*}
J(\widehat{u}_{n}(t_{n},x))-J\left( w_{n}(x)\right) \rightarrow -\rho <0%
\text{,}
\end{equation*}%
which is absurd, for the same reason as above. That is, (\ref{33}) holds,
resulting to 
\begin{equation*}
J(u_{n})=J(u_{n}(t_{n},x))\rightarrow \mu \text{,}
\end{equation*}%
thus implying 
\begin{equation*}
J(\widehat{u}_{n}(t_{n},x))\rightarrow \mu .
\end{equation*}%
In other words, we may consider $u_{n}=u_{n}(t_{n},x)$ as being a minimizing
sequence in $S_{\sigma }$ for $J(u)$. As such, by the previous discussion, $%
u_{n}(t_{n},x)\rightarrow u^{\prime }(x)$, with $u^{\prime }(x)\in $ $%
S_{\sigma }$, being a minimizer of $J(u)$. Now (\ref{33}) ensures that $%
\left \Vert \psi _{n}(t_{n},x)\right \Vert _{H_{0}^{1}(\Omega )}\rightarrow
\left \Vert u^{\prime }(x)\right \Vert _{H_{0}^{1}(\Omega )}$, and by lower
semicontinuity of the norm we finally obtain that $\psi
_{n}(t_{n},x)\rightarrow u^{\prime }(x)$, thus proving\textit{\ orbital}
stability of the stationary solution.

\begin{remark}
As we claimed before, once $\psi (0,x)$ is sufficiently close to $\Gamma $, $%
\psi (t,x)$, $t\geqq 0$, are all close to $\Gamma $. This implies that the
modulus $\left \vert \psi (t,x)\right \vert $\ is, for any $t\geqq 0$, close
to a (possibly different) ground state of the NLSE. This consequence is the
key tool for proving two main results of the next section, Lemmas 21 and 22.
Thus, the orbital stability in this context becomes a localization result
for the moduli $\left \vert \psi (t,x)\right \vert $.
\end{remark}

We summarize the existence and stability results in the following:

\begin{proposition}
The problem 
\begin{equation*}
ih\frac{\partial \psi }{\partial t}=-\frac{h^{2}}{2}\Delta \psi +\frac{1}{%
2h^{\alpha }}W^{\prime }(\left \vert \psi \right \vert )\frac{\psi }{\left
\vert \psi \right \vert }\text{ in }%
\mathbb{R}
_{0}^{+}\times \Omega \text{, }
\end{equation*}%
\begin{equation}
\psi (0,x)=\phi _{h}(x)\text{, }x\in \Omega \text{,}  \label{36}
\end{equation}%
\begin{equation*}
\psi (t,x)=0\text{ on }%
\mathbb{R}
_{0}^{+}\times \partial \Omega \text{, }
\end{equation*}%
where $h>0$, and $\phi _{h}(x)$ being a suitable initial datum, admits a
stationary solution $\psi (t,x)$ of the form $u_{h}(x)e^{-ikt}$. More
concretely, $u_{h}(x)$ is obtained as a solution of the eigenvalue problem 
\begin{eqnarray}
-\frac{h}{2}\Delta u+\frac{1}{2h^{\alpha +1}}W^{\prime }(u) &=&ku\text{, in }%
\Omega  \notag \\
&&  \label{38} \\
u &\equiv &0\text{, on }\partial \Omega \text{,}  \notag
\end{eqnarray}%
for suitable $k$. In addition, $\psi (t,x)$ is stable in the sense of (\ref%
{r}).
\end{proposition}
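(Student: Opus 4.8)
The plan is to reduce Proposition 6 to the case $h=1$ already settled in Section 2, by a dilation in the time variable which absorbs all the $h$-dependence into a rescaled nonlinearity. Fix $h>0$. If $\psi(t,x)$ solves (\ref{36}), set $\varphi(\tau,x):=\psi(\tau/h,x)$, so that $\psi(t,x)=\varphi(ht,x)$; substituting this into (\ref{36}) and dividing through by $h^{2}/2$ shows that $\varphi$ solves
\[
2i\,\partial_{\tau}\varphi=-\Delta\varphi+\widetilde{W}^{\prime}(|\varphi|)\frac{\varphi}{|\varphi|}\ \text{ in }\mathbb{R}_{0}^{+}\times\Omega,\qquad \varphi(0,x)=\phi_{h}(x),\qquad \varphi=0\ \text{ on }\mathbb{R}_{0}^{+}\times\partial\Omega,
\]
where $\widetilde{W}:=h^{-(\alpha+2)}W$. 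Since $\widetilde{W}$ is again a $C^{1}$, bounded, even map and $|\widetilde{W}^{\prime}(s)|\le (h^{-(\alpha+2)}c)\,|s|^{p-1}$ with the \emph{same} exponent $p$, it satisfies Conditions 1 and 2. Hence the whole of Section 2 applies verbatim to the $\widetilde{W}$-equation, which is exactly (\ref{15}) with $W$ replaced by $\widetilde{W}$.

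The existence part of Section 2 then yields a strictly positive constrained minimizer $u_{h}\in S_{\sigma}$ of $u\mapsto\int_{\Omega}\bigl(\tfrac12|\nabla u|^{2}+\widetilde{W}(u)\bigr)dx$ on $S_{\sigma}$, solving the $\widetilde{W}$-analogue of (\ref{s}), namely $-\Delta u_{h}+\widetilde{W}^{\prime}(u_{h})=\widetilde{\lambda}\,u_{h}$ for a suitable $\widetilde{\lambda}\in\mathbb{R}$. Multiplying this identity by $h/2$ and recalling $\widetilde{W}=h^{-(\alpha+2)}W$ turns it into
\[
-\frac{h}{2}\Delta u_{h}+\frac{1}{2h^{\alpha+1}}W^{\prime}(u_{h})=\frac{\widetilde{\lambda}\,h}{2}\,u_{h},
\]
which is exactly (\ref{38}) with $k:=\widetilde{\lambda}\,h/2$. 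Correspondingly $\varphi(\tau,x)=u_{h}(x)e^{-i\widetilde{\lambda}\tau/2}$ is a stationary solution of the $\widetilde{W}$-equation, so $\psi(t,x)=\varphi(ht,x)=u_{h}(x)e^{-ikt}$ is the stationary solution of (\ref{36}) claimed in Proposition 6, with $u_{h}$ a solution of (\ref{38}).

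For the stability assertion, the orbital stability argument of Section 2, applied to the $\widetilde{W}$-equation, shows that the set $\Gamma$ of its ground states (up to a phase) is stable in the sense of (\ref{r}). The constrained minimization problems attached to $\widetilde{W}$ and to the functional $J_{h}(u)=\int_{\Omega}\bigl(\tfrac{h}{4}|\nabla u|^{2}+\tfrac{1}{2h^{\alpha+1}}W(u)\bigr)dx$ differ only by the positive factor $h/2$, so this $\Gamma$ is precisely the orbit of constrained minimizers associated with (\ref{38}). Since $t\mapsto ht$ is an increasing bijection of $[0,\infty)$ onto itself fixing the origin, since $\psi(0,\cdot)=\varphi(0,\cdot)$, and since the $H_{0}^{1}(\Omega)$-norm is unaffected by this reparametrization of time, the stability of $\varphi$ transfers immediately to $\psi$, which is (\ref{r}). (Global-in-time existence of the solutions entering the stability argument is, exactly as in Section 2, guaranteed by the boundedness of $W$ through Cazenave's well-posedness theory.)

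I do not anticipate a genuine obstacle: the structural features that made Section 2 work --- minimizing ``kinetic energy plus a bounded potential'' on the $L^{2}$-sphere of a \emph{bounded} domain, with the compact embedding $H_{0}^{1}(\Omega)\hookrightarrow L^{2}(\Omega)$ supplying the compactness --- are untouched by the value of $h>0$. The only points needing (routine) care are: choosing the change of variables so that the Schr\"{o}dinger balance between the $\partial_{\tau}$ and $\Delta$ terms survives division by $h^{2}/2$, the residual powers of $h$ then being harmless constants in front of $W$; checking that $\widetilde{W}$ still obeys Conditions 1 and 2 (immediate); and keeping track of the normalization so that the eigenvalue equation comes out as (\ref{38}). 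Alternatively one may bypass the reduction altogether and simply rerun the arguments of Section 2 --- boundedness of minimizing sequences in $H_{0}^{1}(\Omega)$, boundedness of the Lagrange multipliers as in Proposition 4, the $H_{0}^{1}$-compactness, and the orbital stability scheme --- with $J_{h}$ and the corresponding energy $E_{h}$ in place of $J$ and $E$; since their coefficients $h/4$ and $1/(2h^{\alpha+1})$ are fixed positive constants, every estimate carries over unchanged.
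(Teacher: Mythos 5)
Your proposal is correct and matches the paper's approach: Proposition 6 is stated there as a summary of the Section 2 results (proved for $h=1$), with the passage to general $h>0$ left implicit, and your time dilation $\psi(t,x)=\varphi(ht,x)$ absorbing the powers of $h$ into a rescaled nonlinearity $\widetilde{W}=h^{-(\alpha+2)}W$ (still satisfying Conditions 1 and 2) is exactly the routine reduction the authors are relying on. The bookkeeping checks out --- multiplying the rescaled Euler--Lagrange equation by $h/2$ gives precisely (\ref{38}) with $k=\widetilde{\lambda}h/2$, and the stability in the sense of (\ref{r}) transfers since $t\mapsto ht$ is a bijection of $\mathbb{R}_{0}^{+}$ --- so no gap remains.
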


We give next the definition of a \textit{solitary wave} with respect to a
bounded domain $\Omega $. To this end, we have to define first the notion of
the \textit{barycenter }of a family of states $\psi _{t}(x)$, $t\geqq 0$,
whose members are obtained by the "time" evolution of an initial state $\psi
_{0}(x)$, in the frame of a proper phase space $X\subset L^{2}(\Omega ,%
\mathbb{C}
)$.

\begin{definition}
For $\psi _{t}(x)$, $t\geqq 0$, as above, its \textit{barycenter, }$q(t)$,%
\textit{\ is defined by the relation }%
\begin{equation}
q(t)=\frac{\int \nolimits_{\Omega }x\left \vert \psi _{t}(x)\right \vert
^{2}dx}{\int \nolimits_{\Omega }\left \vert \psi _{t}(x)\right \vert ^{2}dx}%
\text{.}  \label{34}
\end{equation}
\end{definition}

\begin{remark}
An analogous definition of the \textit{barycenter is given in }\cite%
{Bel-Ben-Bon-Sin}, under the condition that it makes sense. In our case, the
definition of $q(t)$ makes always sense, because $\Omega $ is bounded.
\end{remark}

\begin{remark}
Notice that $q(t)$ does not belong to $\Omega $ necessarily, unless $\Omega $
has specific geometric features. For instance, convexity of $\Omega $ would
ensure that $q(t)\in \Omega $ for all $t\geqq 0$.
\end{remark}

\begin{definition}
The state $\psi \equiv \psi _{0}(x)$ in the phase space $X\subset
L^{2}(\Omega ,%
\mathbb{C}
)$, is called a "solitary wave" in the frame of a dynamical system $%
U_{t}\psi \equiv \psi _{t}(x)$, $t\geqq 0$, where $U:%
\mathbb{R}
_{0}^{+}\times X\rightarrow X$ is the evolution map, if: Given $\varepsilon
>0$, we may find $k(\varepsilon )>0$ such that for each $t\geqq 0$, there
exists a neighborhood $V_{\varepsilon ,t}$ of $q(t)$ with $meas[\Omega
-(V_{\varepsilon ,t}\cap \Omega )]\geqq k(\varepsilon )$, and 
\begin{equation}
\int \nolimits_{\Omega }\left \vert \psi _{t}(x)\right \vert ^{2}dx-\int
\nolimits_{V_{\varepsilon ,t}\cap \Omega }\left \vert \psi _{t}(x)\right
\vert ^{2}dx<\varepsilon \text{.}  \label{35}
\end{equation}
\end{definition}

\begin{remark}
It is easy to see that the stationary solution of (\ref{36}) is a solitary
wave in the above sense: The barycenter in this case is fixed for all $t$,
and one needs to suitably blow up a given neighborhood of it, in order to
meet the requirements of the above definition.
\end{remark}

\section{\protect \bigskip $L^{2}$ localization for $V\neq 0$}

For the rest of the exposition, we assume without loss of generality that $%
0\in \Omega $. We also restrict $p$ in Condition 2 so that $2<p<2+\frac{4}{N}
$, and we impose on $W$ the additional condition:

\begin{condition}
$\exists s_{0\text{ }}$such that $W(s_{0\text{ }})<0$,
\end{condition}

and on $V$ the following one:

\begin{condition}
$V(x)\in C^{0}(\overline{\Omega })$, is nonnegative.
\end{condition}

\begin{remark}
With these restrictions and additional conditions on the nonlinearity and
the potential, problem (\ref{h}) is globally well posed (see \cite[Thm
3.3.1, and Thm 3.4.1]{Caz}), with the energy and the mass remaining constant
in time. Commenting especially on Condition 12, we notice that in the
opposite case, there would exist no nontrvial stationary solution for the
related $%
\mathbb{R}
^{N}$ problem, undermining thus the basic tool we will use in order to prove
the main stability result. In the previous section, where we had posed
weaker conditions on $\Omega $, $W$, and $p$, we were led to nontrivial
stationary solution without any particular comment on $\lambda $. It is
trivial to see that for $W$ having positive lower bound, $\lambda $ is
positive, too. Under the additional conditions imposed above on $\Omega $, $%
W $, and $p$, we can prove, as we will see soon, that we may obtain a
nontrvial solution of (\ref{s}) sitting in $S_{\sigma }$ for suitable $%
\sigma $, with $\lambda =\lambda (\sigma )<0$, provided that $\Omega $
contains a suitably big open ball $B(0,r(\sigma ))$ centered at $0$ with
radius $r(\sigma )$.
\end{remark}

\subsection{Rescalings}

We set $\beta =1+\frac{\alpha }{2}$. For $h<1$, we define the inflated domain%
\begin{equation*}
\Omega _{h}=\left \{ x\in 
\mathbb{R}
^{N}:h^{\beta }x\in \Omega \right \} \text{.}
\end{equation*}%
If $v$ is a $H_{0}^{1}(\Omega _{h})$ solution of the stationary problem 
\begin{eqnarray}
-\Delta u+W^{\prime }(u) &=&\omega u\text{, in }\Omega _{h}  \notag \\
&&  \label{40} \\
u &\equiv &0\text{, on }\partial \Omega _{h}\text{,}  \notag
\end{eqnarray}%
then $v_{h}(x)=v(\frac{x}{h^{\beta }})$ is a $H_{0}^{1}(\Omega )$ solution
of the stationary problem 
\begin{eqnarray}
-h\Delta u+\frac{1}{h^{\alpha +1}}W^{\prime }(u) &=&\frac{\omega }{h^{\alpha
+1}}u\text{, in }\Omega  \notag \\
&&  \label{41} \\
u &\equiv &0\text{, on }\partial \Omega \text{.}  \notag
\end{eqnarray}%
Furthermore, we define the functionals%
\begin{eqnarray*}
C_{h}(u) &=&\frac{1}{h^{N\beta }}\int \nolimits_{\Omega }u^{2}(x)dx\text{, }%
C_{\Omega _{h}}(u)=\int \nolimits_{\Omega _{h}}u^{2}(x)dx\text{,} \\
J_{h}(u) &=&\frac{1}{h^{N\beta }}\int \nolimits_{\Omega }\left[ \frac{h^{2}%
}{2}\left \vert \nabla u\right \vert ^{2}+W_{h}(u)\right] dx\text{, }%
J_{\Omega _{h}}(u)=\int \nolimits_{\Omega _{h}}\left[ \frac{1}{2}\left
\vert \nabla u\right \vert ^{2}+W(u)\right] dx\text{,}
\end{eqnarray*}%
where $W_{h}(u)=\frac{1}{h^{\alpha }}W(u)$.

We have the following identities:%
\begin{equation}
J_{h}(v_{h})=h^{-\alpha }J_{\Omega _{h}}(v)\text{, }C_{h}(v_{h})=C_{\Omega
_{h}}(v)\text{.}  \label{42}
\end{equation}

We next define%
\begin{equation*}
m(h,\Omega ):=\underset{C_{h}=1}{\inf }J_{h}\text{.}
\end{equation*}%
\qquad

\begin{lemma}
For $\Omega _{1}$,$\Omega _{2\text{ }}$two bounded domains as described
above, with $\Omega _{1}\subset \Omega _{2\text{ }}$, it holds $m(h,\Omega
_{2})\leq m(h,\Omega _{1})$. \  \  \  \  \  \  \  \  \  \  \  \  \  \  \  \  \qquad \ 
\end{lemma}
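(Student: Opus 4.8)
The plan is to exploit the monotonicity of the infimum of $J_h$ under constrained minimization when the admissible class of functions is enlarged. The key observation is that $H_0^1(\Omega_1)$ embeds naturally into $H_0^1(\Omega_2)$ by extension by zero: if $u \in H_0^1(\Omega_1)$, then the function $\tilde u$ that equals $u$ on $\Omega_1$ and $0$ on $\Omega_2 \setminus \Omega_1$ lies in $H_0^1(\Omega_2)$, and moreover $\nabla \tilde u$ is the extension by zero of $\nabla u$. Consequently $\int_{\Omega_2}|\nabla \tilde u|^2 = \int_{\Omega_1}|\nabla u|^2$, $\int_{\Omega_2}W_h(\tilde u)\,dx = \int_{\Omega_1}W_h(u)\,dx + \tfrac{1}{h^\alpha}W(0)\,\mathrm{meas}(\Omega_2\setminus\Omega_1)$, and $\int_{\Omega_2}\tilde u^2 = \int_{\Omega_1}u^2$. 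Since $W$ is even and $C^1$ with $|W'(s)|\le c|s|^{p-1}$ (Condition 2), we have $W(0)=0$; hence in fact $J_h(\tilde u) = J_h(u)$ and $C_h(\tilde u) = C_h(u)$, where on the left the functionals are computed over $\Omega_2$ and on the right over $\Omega_1$.

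First I would make the extension-by-zero map explicit and record the three integral identities above, noting $W(0)=0$. Second, I would observe that this map sends the constraint set $\{u \in H_0^1(\Omega_1): C_h(u) = 1\}$ into the constraint set $\{u \in H_0^1(\Omega_2): C_h(u)=1\}$, preserving the value of $J_h$. Third, taking the infimum: for every admissible $u$ on $\Omega_1$ we get an admissible $\tilde u$ on $\Omega_2$ with $J_h(\tilde u) = J_h(u)$, so $m(h,\Omega_2) = \inf_{C_h=1,\ H_0^1(\Omega_2)} J_h \le J_h(\tilde u) = J_h(u)$; taking the infimum over all such $u$ yields $m(h,\Omega_2) \le m(h,\Omega_1)$. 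One should note that the infimum defining $m(h,\Omega)$ is finite (bounded below) by the same argument as in \eqref{s1}, using that $W$ is bounded and $\mathrm{meas}(\Omega)<\infty$, so the inequality is between genuine real numbers.

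There is essentially no serious obstacle here; the only point requiring a word of care is the verification that extension by zero genuinely lands in $H_0^1(\Omega_2)$ with the gradient behaving as claimed — this is standard once one recalls that $H_0^1(\Omega_1)$ is the closure of $C_c^\infty(\Omega_1)$, so one first checks the identity for test functions (for which extension by zero is immediate) and then passes to the limit, using that both the $H^1$ norm and the $L^2$ norm are preserved under the extension so that Cauchy sequences map to Cauchy sequences. I would also remark that the same monotonicity, via the identities \eqref{42}, transfers to the rescaled picture on the inflated domains $\Omega_h$, which is presumably how the lemma will be used in the sequel.
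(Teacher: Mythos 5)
Your argument is exactly the paper's: the entire published proof of this lemma is the one-line remark that $H_{0}^{1}(\Omega _{1})\subset H_{0}^{1}(\Omega _{2})$ via extension by zero, and your write-up merely fills in the routine verifications. One small slip worth flagging: $W(0)=0$ does \emph{not} follow from Condition 2, which constrains only $W^{\prime }$ (replacing $W$ by $W+c$ for a small constant $c$ preserves Conditions 1, 2 and 12 while changing $W(0)$); the normalization $W(0)=0$ really is needed, since otherwise the zero-extension shifts $J_{h}$ by $h^{-N\beta -\alpha }W(0)\,\mathrm{meas}(\Omega _{2}\setminus \Omega _{1})$ and the monotonicity can fail, but in the paper this is an implicit hypothesis (made explicit only in the Appendix) rather than a consequence of the stated conditions.
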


\begin{proof}
It is straightforward, since $H_{0}^{1}(\Omega _{1})\subset H_{0}^{1}(\Omega
_{2\text{ }})$ by extending a function in $H_{0}^{1}(\Omega _{1})$ into a
function in $H_{0}^{1}(\Omega _{2\text{ }})$ by zero on $\Omega _{2\text{ }%
}\backslash \Omega _{1}$.
\end{proof}

\begin{lemma}
$m(h,\Omega )=h^{-\alpha }m(1,\Omega _{h})$, where $m(1,\Omega _{h})=%
\underset{C_{\Omega _{h}}=1}{\inf }J_{\Omega _{h}}$.
\end{lemma}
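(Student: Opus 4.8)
The plan is to treat the statement as a pure change of variables, using the dilation $v\mapsto v_h$, $v_h(x)=v(x/h^{\beta})$, that already appears above. First I would record that this map is a linear bijection $T_h\colon H_0^1(\Omega_h)\to H_0^1(\Omega)$, with inverse $u\mapsto u(h^{\beta}\,\cdot\,)$; this is immediate from the definition $\Omega_h=\{x:h^{\beta}x\in\Omega\}$ and the fact that $h^{\beta}>0$, together with the usual rescaling behaviour of the weak gradient.

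Next I would invoke the two identities in (\ref{42}): for every $v\in H_0^1(\Omega_h)$ one has $J_h(v_h)=h^{-\alpha}J_{\Omega_h}(v)$ and $C_h(v_h)=C_{\Omega_h}(v)$. The second of these says that $T_h$ maps the constraint set $\{v\in H_0^1(\Omega_h):C_{\Omega_h}(v)=1\}$ onto the constraint set $\{u\in H_0^1(\Omega):C_h(u)=1\}$, and since $T_h$ is a bijection this correspondence is one-to-one and onto. Combining this with the first identity, and pulling the positive constant $h^{-\alpha}$ out of the infimum, I would conclude
\[
m(h,\Omega)=\inf_{C_h(u)=1}J_h(u)=\inf_{C_{\Omega_h}(v)=1}J_h(v_h)=\inf_{C_{\Omega_h}(v)=1}h^{-\alpha}J_{\Omega_h}(v)=h^{-\alpha}\,m(1,\Omega_h).
\]

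There is essentially no substantive obstacle. The only points requiring a word of care are that $T_h$ is genuinely a bijection of the two Sobolev spaces and that it \emph{preserves} the $L^2$-type constraint (rather than merely transforming it into a different one), which is exactly the content of the second identity in (\ref{42}); and that $m(1,\Omega_h)>-\infty$, so that the manipulation with the infimum is legitimate — but this follows just as in (\ref{s1}) from the boundedness of $W$ and of $\Omega_h$ (the latter being bounded since $\Omega$ is). I would present the argument in a couple of lines, essentially the displayed chain of equalities above.
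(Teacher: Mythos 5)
Your proposal is correct and follows exactly the route the paper intends: the paper's entire proof of this lemma is the phrase ``By rescaling,'' and your argument simply spells out that rescaling via the bijection $v\mapsto v(\cdot/h^{\beta})$ together with the identities in (\ref{42}). No discrepancy with the paper's approach.
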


\begin{proof}
By rescaling.
\end{proof}

Notice that the conditions satisfied by $p$, $W$ correspond to the
prerequisites for the existence result given in \cite{Bel-B-G-M} to hold.
Namely:

\begin{lemma}
There exists some $\overline{\sigma }>0$ such that for all $\sigma >$ $%
\overline{\sigma }$ a positive minimizer $u_{\sigma }\in H^{1}(%
\mathbb{R}
^{N})$ exists for $J(u)$ over all $u\in H^{1}(%
\mathbb{R}
^{N})$, $\left \Vert u\right \Vert _{L^{2}(%
\mathbb{R}
^{N})}=\sigma $. In fact, $\left \Vert u_{\sigma }\right \Vert _{L^{2}(%
\mathbb{R}
^{N})}=\sigma $, $J(u_{\sigma })<0$, and $u_{\sigma }$ is a solution of the $%
\mathbb{R}
^{N}$ version of (\ref{s}), with $\lambda <0$.
\end{lemma}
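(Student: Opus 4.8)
The plan is to prove this by a concentration-compactness argument for the constrained minimization problem $\inf\{J(u):u\in H^1(\mathbb{R}^N),\ \|u\|_{L^2}=\sigma\}$, exactly in the spirit of \cite{Bel-B-G-M}, since Conditions 1, 2 and 12 together with the restriction $2<p<2+\frac{4}{N}$ are precisely the hypotheses used there. First I would record that $J$ is bounded below on the constraint: writing $J(u)=\frac12\|\nabla u\|_{L^2}^2+\int W(u)$, Condition 1 gives $\int W(u)\geq -k\cdot|\{u\neq 0\}|$ — which is the one point where working in all of $\mathbb{R}^N$ (rather than a bounded domain) forces care, but the Gagliardo--Nirenberg inequality with exponent $p<2+\frac4N$ controls $\int|W(u)|\leq c\int|u|^p\leq C\|\nabla u\|_{L^2}^{N(p-2)/2}\sigma^{\,p-N(p-2)/2}$ with $N(p-2)/2<2$, so $J(u)\geq \frac12\|\nabla u\|_{L^2}^2 - C\|\nabla u\|_{L^2}^{\theta}\sigma^{\,p-\theta}$ with $\theta<2$; hence $I(\sigma):=\inf_{S_\sigma}J>-\infty$ and every minimizing sequence is bounded in $H^1(\mathbb{R}^N)$.

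Next I would establish the two facts that make concentration-compactness go through: (i) $I(\sigma)<0$ for $\sigma$ large enough, and (ii) the strict subadditivity $I(\sigma)<I(\tau)+I(\sigma-\tau)$ for $0<\tau<\sigma$ in the relevant range. For (i), use Condition 12: pick $s_0$ with $W(s_0)<0$ and a fixed bump profile $\varphi$ equal to $s_0$ on a large ball, then dilate — $u_R(x)=\varphi(x/R)$ has $\|u_R\|_{L^2}^2\sim R^N$, $\|\nabla u_R\|_{L^2}^2\sim R^{N-2}$, and $\int W(u_R)\leq W(s_0)\cdot c\,R^N + o(R^N)$, so after rescaling back to the sphere $S_\sigma$ one gets $J<0$ once $\sigma$ (equivalently $R$) is large; this simultaneously pins down $\overline\sigma$. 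For (ii), the homogeneity-type scaling $I(\sigma)\sim -c\,\sigma^{2}$ heuristically (the volume term dominates) gives strict superadditivity of $-I$, i.e. strict subadditivity of $I$, in the regime $I<0$; I would prove it cleanly by the standard dilation trick $I(\theta\sigma)<\theta\,I(\sigma)$ for $\theta>1$ whenever $I(\sigma)<0$, which yields strict subadditivity.

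With (i) and (ii) in hand, apply Lions' concentration-compactness lemma to $\rho_n=|u_n|^2$ for a minimizing sequence: \emph{vanishing} is excluded because it would force $\int|u_n|^p\to0$ hence $\liminf J(u_n)\geq0>I(\sigma)$; \emph{dichotomy} is excluded by the strict subadditivity (ii); therefore \emph{compactness} holds, i.e. up to translation $u_n\to u_\sigma$ in $L^2$, and boundedness in $H^1$ plus weak lower semicontinuity of $\|\nabla\cdot\|_{L^2}$ together with continuity of $u\mapsto\int W(u)$ along the (now $L^p$-convergent, $p<2^*$) sequence give $J(u_\sigma)\leq\liminf J(u_n)=I(\sigma)$, so $u_\sigma$ is a minimizer with $\|u_\sigma\|_{L^2}=\sigma$ and $J(u_\sigma)=I(\sigma)<0$. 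Replacing $u_\sigma$ by $|u_\sigma|$ (admissible since $W$ is even, and $J(|u_\sigma|)\le J(u_\sigma)$) and applying Lagrange multipliers on the $L^2$-sphere yields $-\Delta u_\sigma+W'(u_\sigma)=\lambda u_\sigma$; pairing with $u_\sigma$ and using $J(u_\sigma)<0$ gives $\lambda\sigma^2 = 2J(u_\sigma)+\int(W'(u_\sigma)u_\sigma-2W(u_\sigma))$, and one checks via the negativity of the energy that $\lambda<0$ (this is where $J(u_\sigma)<0$ is essential); strict positivity of $u_\sigma$ then follows from Harnack / the strong maximum principle as in the $V=0$ case. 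The main obstacle is the verification of strict subadditivity (ii) — equivalently ruling out dichotomy — since that is the delicate ingredient of concentration-compactness; everything else is a routine transcription of the bounded-domain argument of Section 2 to $\mathbb{R}^N$, with Gagliardo--Nirenberg replacing the compact Sobolev embedding.
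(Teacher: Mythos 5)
The paper does not actually prove this lemma: it is imported wholesale from \cite{Bel-B-G-M}, the sentence immediately preceding it merely observing that Conditions 1, 2, 12 and the restriction $2<p<2+\frac{4}{N}$ are exactly the hypotheses of the existence result there. So your proposal cannot be compared with an argument in this paper; what it can be compared with is the standard proof of such statements, and on that score your outline is essentially the right one (and essentially what the cited reference does): coercivity of $J$ on $S_{\sigma}$ via Gagliardo--Nirenberg using $N(p-2)/2<2$, negativity of $I(\sigma)=\inf_{S_\sigma}J$ for large $\sigma$ from Condition 12 by dilation (this is exactly where $\overline{\sigma}$ comes from, and the dilation inequality $I(\kappa s)\leq \kappa^{2}I(s)$ for $\kappa>1$ shows the set $\{I<0\}$ is a half-line), strict subadditivity, and exclusion of vanishing and dichotomy. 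You rightly flag subadditivity as the delicate point; note additionally that dichotomy must be excluded for splittings in which one fragment carries small mass, where $I$ may equal $0$ and your scaling inequality $I(\theta\sigma)<\theta I(\sigma)$ (whose proof needs $I(\sigma)<0$ and a lower bound on $\|\nabla u_{n}\|_{L^{2}}$ along minimizing sequences) is not directly applicable to that fragment; one then has to invoke $I\leq 0$ everywhere and strict monotonicity, as in \cite{Caz-Lions} and \cite{Bel-B-G-M}.

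The one step that does not close as written is $\lambda<0$. The identity $\lambda\sigma^{2}=2J(u_{\sigma})+\int\left(W^{\prime}(u_{\sigma})u_{\sigma}-2W(u_{\sigma})\right)dx$ is correct, but Conditions 1, 2, 12 give no sign control on $\int\left(W^{\prime}(u_{\sigma})u_{\sigma}-2W(u_{\sigma})\right)dx$, so negativity of $J(u_{\sigma})$ alone does not yield $\lambda<0$ from this identity. The standard repair is to read $\lambda\sigma^{2}$ as $\frac{d}{d\theta}J(\theta u_{\sigma})\big|_{\theta=1}$ and compare difference quotients from the left with the dilation bound $I(\theta\sigma)\geq\theta^{2}I(\sigma)$ for $\theta<1$: since $J(\theta u_{\sigma})\geq I(\theta\sigma)\geq\theta^{2}I(\sigma)$ with equality at $\theta=1$, one gets $\lambda\sigma^{2}\leq 2I(\sigma)<0$. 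With that substitution, and the small-mass caveat above, your outline is sound.
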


We consider an increasing sequence $\left \{ r_{n}\right \} $, with $%
r_{n}\rightarrow \infty $. Let%
\begin{equation*}
m_{\sigma }(B(0,r_{n})):=\underset{\left \Vert w\right \Vert
_{L^{2}(B(0,r_{n}))}=\sigma }{\inf }J(w)\text{.}
\end{equation*}%
Trivially, as Lemma 15 indicates, we see that 
\begin{equation}
m_{\sigma }(B(0,r_{1}))\geqq m_{\sigma }(B(0,r_{2}))\geqq \cdot \cdot \cdot
\geqq m_{\sigma }(%
\mathbb{R}
^{N})=\underset{\left \Vert w\right \Vert _{L^{2}(%
\mathbb{R}
^{N})}=\sigma }{\inf }J(w)>-\infty \text{.}  \label{43}
\end{equation}

\begin{lemma}
$\underset{n\rightarrow \infty }{\lim }m_{\sigma }(B(0,r_{n}))=m_{\sigma }(%
\mathbb{R}
^{N})$.
\end{lemma}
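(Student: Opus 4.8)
The plan is to prove that $\lim_{n\to\infty} m_\sigma(B(0,r_n)) = m_\sigma(\mathbb{R}^N)$ by establishing the reverse inequality to the monotonicity already noted in \eqref{43}. Since \eqref{43} gives $m_\sigma(B(0,r_n)) \geq m_\sigma(\mathbb{R}^N)$ for every $n$, and the sequence $\{m_\sigma(B(0,r_n))\}$ is nonincreasing and bounded below, it converges to some limit $L \geq m_\sigma(\mathbb{R}^N)$; it remains to show $L \leq m_\sigma(\mathbb{R}^N)$. First I would invoke Lemma~16: there exists a positive minimizer $u_\sigma \in H^1(\mathbb{R}^N)$ with $\|u_\sigma\|_{L^2(\mathbb{R}^N)} = \sigma$ and $J(u_\sigma) = m_\sigma(\mathbb{R}^N)$. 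The idea is then to approximate $u_\sigma$ by functions compactly supported in the balls $B(0,r_n)$.

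The key steps, in order, are as follows. First, fix $\varepsilon > 0$. Using the density of $C_c^\infty(\mathbb{R}^N)$ in $H^1(\mathbb{R}^N)$ together with the continuity of $J$ with respect to the $H^1$ norm (the gradient term is continuous in the $H^1$ norm, and the $W$ term is continuous by Condition~2 via the Nemytskii operator argument used earlier in the paper), choose a function $\varphi \in C_c^\infty(\mathbb{R}^N)$ with $\|\varphi - u_\sigma\|_{H^1}$ small enough that $|J(\varphi) - J(u_\sigma)|$ is small; one may equally just truncate $u_\sigma$ by a smooth cutoff $\chi_R$ equal to $1$ on $B(0,R)$ and supported in $B(0,2R)$, and let $R \to \infty$, using dominated convergence on $\int |\nabla(\chi_R u_\sigma)|^2$, $\int (\chi_R u_\sigma)^2$, and $\int W(\chi_R u_\sigma)$. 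Second, the truncated function $\varphi$ has $L^2$ norm close to but not exactly $\sigma$, so rescale it: set $\tilde\varphi = \frac{\sigma}{\|\varphi\|_{L^2}}\varphi$, which lies on the constraint sphere and is still compactly supported. By the same Lipschitz-type estimate on $J$ under multiplicative rescaling that appears repeatedly in Section~2 (splitting the gradient term, which scales by $\alpha_n^2$, and the $W$ term, controlled via Condition~2 and the mean value theorem), one gets $|J(\tilde\varphi) - J(\varphi)|$ small. Third, since $\tilde\varphi$ has compact support and $r_n \to \infty$, for all $n$ large enough $\operatorname{supp}\tilde\varphi \subset B(0,r_n)$, so $\tilde\varphi$ is an admissible competitor for $m_\sigma(B(0,r_n))$, giving $m_\sigma(B(0,r_n)) \leq J(\tilde\varphi) \leq m_\sigma(\mathbb{R}^N) + C\varepsilon$. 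Letting $n \to \infty$ and then $\varepsilon \to 0$ yields $L \leq m_\sigma(\mathbb{R}^N)$, completing the proof.

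The main obstacle, such as it is, is bookkeeping rather than conceptual: one must make sure that both the truncation step and the renormalization step perturb $J$ by a controllably small amount, and in particular that the quantities $\int |\nabla u_\sigma|^2$ and $\int u_\sigma^2$ remain bounded (they are, since $u_\sigma \in H^1$) so that the rescaling factor $\frac{\sigma}{\|\varphi\|_{L^2}} \to 1$ and the estimates close. The boundedness of $W$ (Condition~1) makes the potential term especially harmless under truncation, and Condition~2 handles it under rescaling exactly as in the orbital stability argument earlier in the paper. No compactness or concentration-compactness machinery is needed here because we are only proving convergence of the infima (an upper bound via explicit competitors), not convergence of minimizers.
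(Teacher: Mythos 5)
Your proposal is correct and follows essentially the same route as the paper: both take the minimizer $u_{\sigma}$ on $\mathbb{R}^{N}$ guaranteed by the existence lemma, truncate it with a smooth cutoff supported in a large ball, renormalize back onto the constraint sphere $\left\Vert \cdot\right\Vert_{L^{2}}=\sigma$ (with rescaling factor tending to $1$), and use the resulting compactly supported function as a competitor for $m_{\sigma}(B(0,r_{n}))$, controlling the change in $J$ via the gradient estimate for the cutoff and the continuity of the Nemytskii operator from Condition 2. The only difference is cosmetic — the paper indexes the cutoff directly by $r_{n}$ and passes to the limit in $J(u_{n})$, while you fix $\varepsilon$ first — so there is nothing further to add.
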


\begin{proof}
By Lemma 17, $m_{\sigma }(%
\mathbb{R}
^{N})$ is attained by some $\overline{u}$ $\in $ $H^{1}(%
\mathbb{R}
^{N})$,

with $\left \Vert \overline{u}\right \Vert _{L^{2}(%
\mathbb{R}
^{N})}=\sigma $. Actually $\overline{u}$ is radial (\cite{Caz-Lions}, Thm.
II.1 and Rem. II.3).

For each $n$, we may choose a $C^{\infty }(%
\mathbb{R}
^{N})$ real function $\chi _{n}$ satisfying 
\begin{eqnarray}
\chi _{n} &\equiv &1\text{ if }\left \vert x\right \vert \leq r_{n}/2\text{,}
\notag \\
\chi _{n} &\equiv &0\text{ if }\left \vert x\right \vert \geqq r_{n}\text{,}
\label{44} \\
\left \vert \nabla \chi _{n}\right \vert &\leq &4/r_{n}\text{.}  \notag
\end{eqnarray}%
We define $w_{n}=\chi _{n}\overline{u}$. For suitable $t_{n}>0$, we have $%
\left \Vert t_{n}w_{n}\right \Vert _{L^{2}(%
\mathbb{R}
^{N})}=$

$\left \Vert t_{n}w_{n}\right \Vert _{L^{2}(B(0,r_{n}))}=\sigma $. Setting $%
u_{n}=t_{n}w_{n}$, (\ref{43}) yields%
\begin{equation}
J(u_{n})\geqq m_{\sigma }(B(0,r_{n}))\geqq m_{\sigma }(%
\mathbb{R}
^{N})\text{.}  \label{45}
\end{equation}%
We want to prove $\underset{n\rightarrow \infty }{\lim }J(u_{n})=m_{\sigma }(%
\mathbb{R}
^{N})$, that will finish the proof. We have $w_{n}\rightarrow \overline{u}$
in $L^{2}(%
\mathbb{R}
^{N})$. Thus $t_{n}\rightarrow 1$, and $u_{n}\rightarrow $ $\overline{u}$ in 
$L^{2}(%
\mathbb{R}
^{N})$. We also have%
\begin{eqnarray*}
\int \nolimits_{%
\mathbb{R}
^{N}}\left \vert \nabla w_{n}-\nabla \overline{u}\right \vert ^{2}dx
&=&\int \nolimits_{%
\mathbb{R}
^{N}}\left \vert \left( \nabla \chi _{n}\right) \overline{u}+(\chi
_{n}-1)\nabla \overline{u}\right \vert ^{2}dx \\
&\leq &2\int \nolimits_{%
\mathbb{R}
^{N}}\left \vert \left( \nabla \chi _{n}\right) \overline{u}\right \vert
^{2}dx+2\int \nolimits_{%
\mathbb{R}
^{N}}\left \vert (\chi _{n}-1)\nabla \overline{u}\right \vert ^{2}dx \\
&\leq &32/r_{n}^{2}\int \nolimits_{\left \vert x\right \vert >r_{n}/2}\left
\vert \overline{u}\right \vert ^{2}dx+2\int \nolimits_{\left \vert x\right
\vert >r_{n}/2}\left \vert \nabla \overline{u}\right \vert ^{2}dx\rightarrow
0
\end{eqnarray*}%
as $n\rightarrow \infty $. So $\nabla w_{n}\rightarrow \nabla \overline{u}$
in $L^{2}$, and $\nabla u_{n}\rightarrow \nabla \overline{u}$ in $L^{2}$
too. Thus $u_{n}\rightarrow \overline{u}$ in $H^{1}(%
\mathbb{R}
^{N})$. This is combined with the continuity of the Nemytskii operator%
\begin{equation*}
W:L^{t}\left( 
\mathbb{R}
^{N}\right) \rightarrow L^{1}\left( 
\mathbb{R}
^{N}\right) \text{, }2<t<2^{\ast }\text{,}
\end{equation*}%
to ensure that 
\begin{equation*}
\int \nolimits_{%
\mathbb{R}
^{N}}W(u_{n})dx\rightarrow \int \nolimits_{%
\mathbb{R}
^{N}}W(\overline{u})dx\text{.}
\end{equation*}%
Thus $\underset{n\rightarrow \infty }{\lim }J(u_{n})=m_{\sigma }(%
\mathbb{R}
^{N})$, and the proof has been completed.
\end{proof}

\begin{remark}
The above Lemma makes clear the final assertion of Rem. 14: If $\Omega $
contains a suitably big open ball $B(0,r(\sigma ))$ with $\overline{\sigma }%
<\sigma $, then $m_{\sigma }(1,\Omega )$ has to be negative, and so has to
be the eigenvalue $\lambda =\lambda (\sigma )$ related to (\ref{s}).
\end{remark}

\begin{lemma}
For a sequence of positive numbers $h_{k}\rightarrow 0$, for $k\rightarrow
\infty $, it holds%
\begin{equation*}
\underset{k\rightarrow \infty }{\lim }m_{\sigma }(1,\Omega
_{h_{k}})=m_{\sigma }(%
\mathbb{R}
^{N})\text{,}
\end{equation*}%
where $m_{\sigma }(1,\Omega _{h_{k}})=\underset{C_{\Omega _{h_{k}}}=\sigma }{%
\inf }J_{\Omega _{h_{k}}}$.
\end{lemma}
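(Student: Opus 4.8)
The plan is to relate the domains $\Omega_{h_k}$ to a family of balls. Recall $\Omega_{h_k}=\{x:h_k^\beta x\in\Omega\}$, which is an expanding family of domains exhausting $\mathbb{R}^N$ as $h_k\to0$, since $0\in\Omega$. Concretely, because $\Omega$ is open and contains $0$, it contains a ball $B(0,\rho)$, and because $\Omega$ is bounded it is contained in a ball $B(0,R)$; hence $B(0,\rho h_k^{-\beta})\subset\Omega_{h_k}\subset B(0,Rh_k^{-\beta})$. Both inner and outer radii tend to $+\infty$. So I would first invoke Lemma 15 (monotonicity of the infimum under inclusion of domains), applied with the rescaling-invariant pair $(C_{\Omega_h},J_{\Omega_h})$ in place of $(C_h,J_h)$ — the same one-line extension-by-zero argument works — to get, for each $k$,
\begin{equation*}
m_\sigma(B(0,Rh_k^{-\beta}))\;\leq\;m_\sigma(1,\Omega_{h_k})\;\leq\;m_\sigma(B(0,\rho h_k^{-\beta}))\text{.}
\end{equation*}

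Next I would squeeze. Set $r_k^{\mathrm{in}}=\rho h_k^{-\beta}$ and $r_k^{\mathrm{out}}=Rh_k^{-\beta}$; both are increasing sequences (after passing to a subsequence of the $h_k$, or just using that $h_k\to0$) tending to $\infty$. Lemma 18 gives $\lim_k m_\sigma(B(0,r_k^{\mathrm{in}}))=m_\sigma(\mathbb{R}^N)$ and likewise for $r_k^{\mathrm{out}}$ — strictly, Lemma 18 is stated for an arbitrary increasing sequence of radii going to infinity, and the proof (truncating the radial minimizer $\overline u$ by cutoffs $\chi_n$) applies verbatim to $\{r_k^{\mathrm{in}}\}$ and $\{r_k^{\mathrm{out}}\}$. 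Combining with the sandwich inequality above and the squeeze theorem yields $\lim_k m_\sigma(1,\Omega_{h_k})=m_\sigma(\mathbb{R}^N)$, which is the claim.

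One small point to handle cleanly: the definition of $m_\sigma(B(0,r))$ uses the constraint $\|w\|_{L^2}=\sigma$ and the functional $J$ (the $\mathbb{R}^N$ version, i.e. $J_{\Omega_h}$ with $h=1$), so one must note that $m_\sigma(1,\Omega_{h_k})$ as defined in the statement is exactly $\inf\{J(w):\mathrm{supp}\,w\subset\Omega_{h_k},\ \|w\|_{L^2}=\sigma\}$, which is consistent with the $m_\sigma(B(0,r))$ notation when the domain is a ball; Lemma 15's proof shows the infimum only decreases when the domain grows, which is what the two displayed inequalities encode.

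The main obstacle — really the only thing requiring care — is making sure the monotonicity (Lemma 15) is being applied to the right pair of functionals: it was originally stated for $(C_h,J_h)$ on subdomains of $\Omega$, whereas here I need it for $(C_{\Omega_h},J_{\Omega_h})=(C_1,J_1)$ restricted to nested subdomains of $\mathbb{R}^N$ (namely $B(0,r_k^{\mathrm{in}})\subset\Omega_{h_k}\subset B(0,r_k^{\mathrm{out}})$). Since the proof of Lemma 15 is purely the extension-by-zero inclusion $H_0^1(\Omega_1)\subset H_0^1(\Omega_2)$, and both $C$ and $J$ are unchanged by such an extension, the argument transfers with no change; I would simply remark this rather than re-prove it. Everything else is the squeeze theorem applied to Lemma 18.
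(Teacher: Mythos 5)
Your proposal is correct and is essentially the paper's own argument: the paper's entire proof of this lemma is ``Combine Lemmas 15 and 18,'' i.e.\ exactly the sandwich between balls via the extension-by-zero monotonicity and the limit of the ball infima, which you have simply written out in full (including the minor points about $0\in\Omega$, the inner/outer radii $\rho h_k^{-\beta}$ and $Rh_k^{-\beta}$, and the transfer of Lemma 15 to the rescaled functionals, all of which the paper leaves implicit). Nothing further is needed.
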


\begin{proof}
Combine Lemmas 15 and 18.
\end{proof}

\subsection{$L^{2}$ localization}

To facilitate exposition, we make the harmless assumption that $\sigma =1$,
thus suppressing subindices in all involved infima $m$. We have the
following:

\begin{lemma}
For any $\varepsilon >0$, there exist $\delta =\delta (\varepsilon )$, $%
h_{0}=h_{0}(\varepsilon )>0$, and $R=R(\varepsilon )>0$ such that, for any $%
0<h<h_{0}(\varepsilon )$, there is an open ball $B(\widehat{q}_{h},h^{\beta
}R)\subset $ $\Omega $ so that for any $u\in H_{0}^{1}(\Omega )$ with $%
C_{h}(u)=1$, and $J_{h}(u)<m(h,\Omega )+\delta h^{-\alpha }$, 
\begin{equation*}
\frac{1}{h^{N\beta }}\int \nolimits_{\Omega \backslash B(\widehat{q}%
_{h},h^{\beta }R)}u^{2}dx<\varepsilon
\end{equation*}%
to hold.
\end{lemma}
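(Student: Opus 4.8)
The plan is to argue by contradiction and exploit the concentration–compactness machinery via the rescaled problem on the inflated domain $\Omega_h$, where, by Lemma~21, the constrained infimum $m_\sigma(1,\Omega_h)$ converges to $m_\sigma(\mathbb{R}^N)$ as $h\to 0$. First I would transfer everything to $\Omega_h$: using the rescaling $v_h(x)=v(x/h^\beta)$ and the identities $(\ref{42})$, a function $u\in H^1_0(\Omega)$ with $C_h(u)=1$ and $J_h(u)<m(h,\Omega)+\delta h^{-\alpha}$ corresponds to a function $v=v(x)$ on $\Omega_h$ with $C_{\Omega_h}(v)=1$ and $J_{\Omega_h}(v)<m_\sigma(1,\Omega_h)+\delta$; moreover $h^{-N\beta}\int_{\Omega\setminus B(\widehat q_h,h^\beta R)}u^2\,dx=\int_{\Omega_h\setminus B(\widehat q_h/h^\beta,R)}v^2\,dx$. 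So the statement is equivalent to: for every $\varepsilon>0$ there are $\delta,h_0,R$ such that for $h<h_0$ every $v$ on $\Omega_h$ with $C_{\Omega_h}(v)=1$, $J_{\Omega_h}(v)<m_\sigma(1,\Omega_h)+\delta$ concentrates $1-\varepsilon$ of its mass in some ball of radius $R$.

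Next I would suppose the conclusion fails: there is $\varepsilon_0>0$, sequences $\delta_k\to0$, $h_k\to0$, $R_k\to\infty$ (one takes $R_k\to\infty$ since for each fixed $R$ the statement must fail for some $h$), and $v_k$ on $\Omega_{h_k}$ with $C_{\Omega_{h_k}}(v_k)=1$, $J_{\Omega_{h_k}}(v_k)<m_\sigma(1,\Omega_{h_k})+\delta_k$, yet for every ball of radius $R_k$ the complement carries mass $\ge\varepsilon_0$. Extending each $v_k$ by zero, I view $\{v_k\}$ as a minimizing sequence in $H^1(\mathbb{R}^N)$ for $J$ on the constraint $\|v\|_{L^2}=\sigma$: indeed $J(v_k)=J_{\Omega_{h_k}}(v_k)\to m_\sigma(\mathbb{R}^N)$ by Lemma~21 and $\liminf J(v_k)\ge m_\sigma(\mathbb{R}^N)$ by $(\ref{43})$ for $\mathbb{R}^N$; the sequence is bounded in $H^1$ because $W$ is bounded below (here Conditions~1,~12 and the restriction $2<p<2+\frac4N$ matter, exactly as in \cite{Bel-B-G-M}). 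Now apply the concentration–compactness lemma of Lions to $\rho_k=|v_k|^2$: vanishing is excluded because $J(v_k)<0$ eventually (the infimum is negative, Lemma~17), which forces a nontrivial $L^p$ mass and hence, through Condition~2, cannot coexist with $\int |v_k|^p\to0$; dichotomy is excluded by the strict subadditivity inequality $m_\sigma(\mathbb{R}^N)<m_{\sigma_1}(\mathbb{R}^N)+m_{\sigma_2}(\mathbb{R}^N)$ for $\sigma_1+\sigma_2=\sigma$, which follows from scaling as in the existence theory underlying Lemma~17. Therefore compactness holds: there are points $y_k$ such that for every $\eta>0$ there is $R(\eta)$ with $\int_{|x-y_k|\le R(\eta)}|v_k|^2\,dx\ge\sigma^2-\eta$ for all large $k$. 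Taking $\eta<\varepsilon_0$ and $k$ large enough that $R_k>R(\eta)$ and $B(y_k,R(\eta))\subset\Omega_{h_k}$ (legitimate since $\Omega_{h_k}$ exhausts $\mathbb{R}^N$ after translating so the mass center sits well inside), the complement of $B(y_k,R_k)$ in $\Omega_{h_k}$ carries mass $<\eta<\varepsilon_0$, contradicting the choice of $v_k$.

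From the negation argument one extracts the quantitative statement in the usual way: fix $\varepsilon$, let $R=R(\varepsilon/2)$ be the radius from the compactness alternative applied to a would-be minimizing sequence, and choose $\delta,h_0$ small enough that the "almost minimizer at level $<m_\sigma(1,\Omega_h)+\delta$" hypothesis together with $|m_\sigma(1,\Omega_h)-m_\sigma(\mathbb{R}^N)|<\delta$ (Lemma~21) forces membership in a $J$-sublevel close enough to $m_\sigma(\mathbb{R}^N)$ that the same compactness estimate applies uniformly; then $\widehat q_h:=h^\beta y_h$ (with $y_h$ the concentration point of the rescaled $v$) gives the desired ball $B(\widehat q_h,h^\beta R)\subset\Omega$, and one shrinks $R$ if necessary to keep the ball inside $\Omega$ while retaining $\int_{\Omega\setminus B}u^2<\varepsilon h^{N\beta}$.

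The main obstacle I expect is making the concentration estimate genuinely uniform over the whole sublevel set $\{C_{\Omega_h}(v)=1,\ J_{\Omega_h}(v)<m_\sigma(1,\Omega_h)+\delta\}$ rather than along a single sequence, and simultaneously controlling the position of the concentration ball so that $B(\widehat q_h,h^\beta R)$ actually lies inside $\Omega$ (not merely inside $\mathbb{R}^N$) — this is where boundedness of $\Omega$ and the fact that $\Omega_{h_k}$ exhausts $\mathbb{R}^N$ must be used carefully, and where the repulsive boundary effect mentioned in the introduction would otherwise interfere. Verifying strict subadditivity for this bounded (not necessarily power-type) nonlinearity, to rule out dichotomy, is the other technical point, but it is inherited from \cite{Bel-B-G-M} since the limit problem is the same.
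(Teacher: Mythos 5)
Your proposal is correct and follows essentially the same route as the paper: argue by contradiction, rescale to the inflated domain $\Omega _{h}$, use the convergence $m(1,\Omega _{h})\rightarrow m(\mathbb{R}^{N})$ (Lemma 20) to place the rescaled functions in a sublevel set of the limit functional on $\mathbb{R}^{N}$, and invoke a concentration property of almost-minimizers there. The only difference is that the paper imports that concentration property as a black box (Lemma 15 of \cite{Ben-Ghi-Mich}) whereas you re-derive it via Lions' concentration--compactness; the two points you flag as obstacles (uniformity over the whole sublevel set rather than along a sequence, and keeping the concentration ball inside $\Omega _{h}$) are precisely the content of that cited lemma and are not treated any more explicitly in the paper's own proof.
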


\begin{proof}
We argue by contradiction. Assuming the contrary, there exists $\varepsilon
>0$ such that for any $r>0$ we may find sequences $\delta _{n}=\delta
_{n}(r) $, $h_{n}=h_{n}(r)\rightarrow 0^{+}$, and a sequence $%
u_{h_{n}}=u_{h_{n}}(r)\in H_{0}^{1}(\Omega )$ with $C_{h_{n}}(u_{h_{n}})=1$, 
$J_{h_{n}}(u_{h_{n}})<m(h_{n},\Omega )+\delta _{n}h_{n}^{-\alpha }$ such
that, for all open balls $B(q_{r},h_{n}^{\beta }r)$ $\subset \Omega $, 
\begin{equation*}
\int \nolimits_{\Omega \backslash B(q_{r},h_{n}^{\beta
}r)}u_{h_{n}}^{2}dx\geq \varepsilon h_{n}^{N\beta }
\end{equation*}%
to hold. For each $n$, we now pass to the $\Omega _{h_{n}}$ counterpart of $%
u_{h_{n}}$, denoted by $u_{n}$, that is, $u_{n}(x)=u_{h_{n}}(h_{n}^{\beta
}x) $. Combining (\ref{42}) and Lemma 16, we have $C_{\Omega
_{h_{n}}}(u_{n})=1$, $J_{\Omega _{h_{n}}}(u_{n})<m(1,\Omega _{h_{n}})+\delta
_{n}$, and 
\begin{equation}
\int \nolimits_{\Omega _{h_{n}}\backslash B(h_{n}^{-\beta
}q_{r},r)}u_{n}^{2}dx\geq \varepsilon \text{,}  \label{46}
\end{equation}%
According to Lemma 15 in \cite{Ben-Ghi-Mich}, there is a $\delta >0$, and an
open ball $B(\widehat{q},R^{\prime })$ in $%
\mathbb{R}
^{N}$ such that, for each $w$ in $H^{1}(%
\mathbb{R}
^{N})$, with $\left \Vert w\right \Vert _{L^{2}(%
\mathbb{R}
^{N})}=1$, $J(w)<m(%
\mathbb{R}
^{N})+\delta $, 
\begin{equation*}
\int \nolimits_{%
\mathbb{R}
^{N}\backslash B(\widehat{q},R^{\prime })}w^{2}dx<\varepsilon
\end{equation*}%
to hold. Take now $\delta _{n}(R^{\prime })$, $h_{n}(R^{\prime })$ as above.
Because of Lemma 20, for $n$ big enough, we may ensure that $B(\widehat{q}%
,R^{\prime })\subset \Omega _{h_{n}(R^{\prime })}$, and $m(1,\Omega
_{h_{n}})+\delta _{n}(R^{\prime })<m(%
\mathbb{R}
^{N})+\delta $. Extending $u_{n}$ by $0$ outside $\Omega _{h_{n}(R^{\prime
})}$, we obtain a $H^{1}(%
\mathbb{R}
^{N})$ function meeting the requirements of Lemma 15 in \cite{Ben-Ghi-Mich},
thus%
\begin{equation*}
\int \nolimits_{\Omega _{h_{n}(R^{\prime })}\backslash B(\widehat{q}%
,R^{\prime })}u_{n}^{2}dx=\int \nolimits_{%
\mathbb{R}
^{N}\backslash B(\widehat{q},R^{\prime })}u_{n}^{2}dx<\varepsilon \text{,}
\end{equation*}%
that contradicts (\ref{46}).
\end{proof}

Lemma 21 makes obvious the following:

\begin{lemma}
For any $\varepsilon >0$, there exist $\delta =\delta (\varepsilon )$, $%
h_{0}=h_{0}(\varepsilon )>0$, and $R=R(\varepsilon )>0$ such that, for any $%
0<h<h_{0}(\varepsilon )$, there is an open ball $B(\widehat{q}_{h},h^{\beta
}R)\subset $ $\Omega $ so that for a solution $\psi (t,x)$ of (\ref{h}) with 
$C_{h}(\left \vert \psi (t,x)\right \vert )=1$, and $J_{h}(\left \vert \psi
(t,x)\right \vert )<m(h,\Omega )+\delta h^{-\alpha }$, for each $t\in 
\mathbb{R}
_{0}^{+}$, 
\begin{equation*}
\frac{1}{h^{N\beta }}\int \nolimits_{\Omega \backslash B(\widehat{q}%
_{h},h^{\beta }R)}\left \vert \psi (t,x)\right \vert ^{2}dx<\varepsilon
\end{equation*}%
to hold.
\end{lemma}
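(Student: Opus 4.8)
The plan is to obtain Lemma 22 as an immediate corollary of Lemma 21, read along the time evolution of $\psi$. Fix $\varepsilon>0$ and let $\delta=\delta(\varepsilon)$, $h_0=h_0(\varepsilon)$, $R=R(\varepsilon)$ be the constants produced by Lemma 21. Fix $0<h<h_0$ and a solution $\psi(t,x)$ of (\ref{h}) satisfying the hypotheses of Lemma 22. For each fixed $t\geq0$ set $u:=|\psi(t,\cdot)|$; since $\psi(t,\cdot)\in H_0^1(\Omega,\mathbb{C})$, the diamagnetic inequality gives $u\in H_0^1(\Omega)$ with $|\nabla u|\leq|\nabla\psi(t,\cdot)|$ a.e., so $u$ is an admissible competitor in Lemma 21. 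The assumptions $C_h(|\psi(t,\cdot)|)=1$ and $J_h(|\psi(t,\cdot)|)<m(h,\Omega)+\delta h^{-\alpha}$ are precisely the constraints required in Lemma 21, so that lemma supplies an open ball $B(\widehat{q}_h,h^{\beta}R)\subset\Omega$ with $\frac{1}{h^{N\beta}}\int_{\Omega\backslash B(\widehat{q}_h,h^{\beta}R)}u^2\,dx<\varepsilon$, which is the desired estimate at time $t$.

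The only point that needs a word is the uniformity in $t$: the ball produced by Lemma 21 depends on $h$ and $\varepsilon$ but \emph{not} on the individual admissible function, hence the same $B(\widehat{q}_h,h^{\beta}R)$ serves every slice $|\psi(t,\cdot)|$, $t\geq0$, at once, and the inequality therefore holds for all $t\in\mathbb{R}_0^+$ with a single ball, which is exactly the assertion of Lemma 22. There is no genuine analytic obstacle in this step; it is a direct application of Lemma 21 slice by slice.

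I would also include a remark explaining why the two running hypotheses of Lemma 22 are the natural ones to impose. Writing $\psi(t,x)=u_h(t,x)e^{is_h(t,x)}$ in polar form and using that, by the global well-posedness recalled earlier, both the charge and the energy of (\ref{h}) are conserved, one has $C_h(|\psi(t,\cdot)|)=C_h(|\phi_h|)$, and the suitably rescaled energy equals $J_h(u_h(t,\cdot))$ plus the nonnegative term proportional to $\int_\Omega u_h^2|\nabla s_h|^2\,dx$ and the nonnegative potential term, exactly in the spirit of (\ref{a}). Hence, if the initial datum $\phi_h$ satisfies $C_h(|\phi_h|)=1$ and has rescaled energy below $m(h,\Omega)+\delta h^{-\alpha}$, then $C_h(|\psi(t,\cdot)|)=1$ and $J_h(|\psi(t,\cdot)|)<m(h,\Omega)+\delta h^{-\alpha}$ for every $t\geq0$, and Lemma 22 applies. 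The only care required in this auxiliary argument is the bookkeeping of the rescalings relating $J_h$ and $J_{\Omega_h}$ as in (\ref{42}), and the standard device of handling $\nabla s_h$ only through the globally defined product $u_h^2|\nabla s_h|^2$ so as to avoid the nodal set of $u_h$ — both already used in Sections 2 and 3.
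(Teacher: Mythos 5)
Your proposal is correct and coincides with the paper's treatment: the paper offers no separate proof, stating only that ``Lemma 21 makes obvious'' Lemma 22, which is exactly your slice-by-slice application of Lemma 21 using the fact that the ball $B(\widehat{q}_h,h^{\beta}R)$ depends only on $\varepsilon$ and $h$ and not on the individual admissible function. Your additional remark on why the hypotheses propagate in time via conservation of charge and energy is not part of the paper's argument here, but it correctly anticipates the content of Proposition 23.
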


The correlation of the solutions of the equations (\ref{40}) and (\ref{41}),
combined with (\ref{42}), ensure the existence of "\textit{ground state}"%
\textit{\ }solutions of (\ref{41}), that is, solutions that are minimizers
of $J_{h}(u)$, with $u$ satisfying $C_{h}(u)=1$.

We define next, the following set of admissible initial data, for given $K$, 
$h>0$:%
\begin{equation}
B_{h}^{K}=\left \{ 
\begin{tabular}{l}
$\psi (0,x)=u_{h}(0,x)e^{\frac{i}{h}s_{h}(0,x)}$ \\ 
with $u_{h}(0,x)=(U+w)(x)$ \\ 
$U$ is a ground state solution of (\ref{41}), and $w\in H_{0}^{1}(\Omega )$
s.t. \\ 
$C_{h}(U+w)=1$, and $\left \Vert w\right \Vert _{H_{0}^{1}(\Omega
)}<Kh^{\alpha }$ \\ 
$\left \Vert \nabla s_{h}(0,x)\right \Vert _{L^{\infty }}\leq Kh^{N\beta /2}$
\\ 
$\int \nolimits_{\Omega }V(x)u_{h}^{2}(0,x)dx\leq Kh^{N\beta }$%
\end{tabular}%
\right \}  \label{47}
\end{equation}%
We prove next the basic stability result.

\begin{proposition}
Given $\varepsilon >0$, there exists $h_{0}=h_{0}(\varepsilon )>0$, and $%
R=R(\varepsilon )>0$ such that, for any $0<h<h_{0}(\varepsilon )$, there is
an open ball $B(\widehat{q}_{h},h^{\beta }R)\subset $ $\Omega $ so that for
a solution $\psi (t,x)$ of (\ref{h}) with $C_{h}(\left \vert \psi
(t,x)\right \vert )=1$, and with initial data $\psi (0,x)\in B_{h}^{K}$,
where $K$ is a positive fixed number, it holds%
\begin{equation*}
\frac{1}{h^{N\beta }}\int \nolimits_{\Omega \backslash B(\widehat{q}%
_{h},h^{\beta }R)}\left \vert \psi (t,x)\right \vert ^{2}dx<\varepsilon 
\text{,}
\end{equation*}%
for any $t\in 
\mathbb{R}
_{0}^{+}$.
\end{proposition}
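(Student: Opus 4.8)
The plan is to reduce Proposition 23 to Lemma 22 by showing that any solution $\psi(t,x)$ with initial datum in $B_{h}^{K}$ satisfies, for $h$ small, the energy bound $J_{h}(|\psi(t,x)|)<m(h,\Omega)+\delta h^{-\alpha}$ required in Lemma 22, with the same $\delta=\delta(\varepsilon)$ produced there. Once this is done, Lemma 22 immediately yields the conclusion with the ball $B(\widehat q_{h},h^{\beta}R)$ it provides. Since the mass $C_h(|\psi(t,x)|)=1$ is preserved (Remark 11) and $R(\varepsilon)$, $h_0(\varepsilon)$ come directly from Lemma 22, the only real content is the energy estimate at $t=0$ together with propagation in time.

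First I would estimate $J_{h}(u_{h}(0,x))$ for $u_{h}(0,x)=U+w$. Since $U$ is a ground state of (\ref{41}), scaling via (\ref{42}) and Lemma 16 gives $J_{h}(U)=m(h,\Omega)=h^{-\alpha}m(1,\Omega_h)$. Writing $J_{h}(U+w)-J_{h}(U)$ and using Condition 2 (so $W$ is Lipschitz on bounded sets with the $|s|^{p-1}$ growth) together with the gradient term, each piece is controlled by a constant times $\|w\|_{H_0^1(\Omega)}$ or $\|w\|_{H_0^1(\Omega)}^2$ after dividing by $h^{N\beta}$; the hypothesis $\|w\|_{H_0^1(\Omega)}<Kh^{\alpha}$ then forces $J_{h}(U+w)<m(h,\Omega)+C K h^{\alpha-N\beta}$ — here one must be slightly careful with the $h$-powers, using $\beta=1+\alpha/2$ and $2<p<2+4/N$ exactly as in the rescaling subsection, so that the correction is $o(h^{-\alpha})$ as $h\to0$, hence $<\tfrac{\delta}{2}h^{-\alpha}$ for $h<h_0$. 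Next, the phase contributes $\tfrac{1}{2h^{N\beta}}\int_\Omega u_h^2(0,x)\,|\tfrac1h\nabla s_h(0,x)|^2\,dx\le \tfrac{1}{2h^{2+N\beta}}\|\nabla s_h(0,x)\|_{L^\infty}^2\,C_h(u_h)\,h^{N\beta}$, which by $\|\nabla s_h(0,x)\|_{L^\infty}\le Kh^{N\beta/2}$ is $O(h^{N\beta-2})$, again negligible against $h^{-\alpha}$ for $h$ small. Likewise the potential term $\tfrac{1}{h^{N\beta}}\int_\Omega V u_h^2(0,x)\,dx\le K$ is a bounded contribution to the energy $E_h(\psi(0,x))$.

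The propagation step uses conservation of the full energy $E_h(\psi(t,x))=E_h(\psi(0,x))$ (Remark 11, citing \cite{Caz}). Since $E_h(\psi(t,x))=J_h(|\psi(t,x)|)+\tfrac{1}{2h^{N\beta}}\int_\Omega |\psi|^2|\tfrac1h\nabla s|^2 dx+\tfrac{1}{h^{N\beta}}\int_\Omega V|\psi(t,x)|^2 dx$ and the last two terms are nonnegative, we get $J_h(|\psi(t,x)|)\le E_h(\psi(t,x))=E_h(\psi(0,x))$, and by the $t=0$ estimates $E_h(\psi(0,x))\le m(h,\Omega)+\tfrac{\delta}{2}h^{-\alpha}+O(h^{N\beta-2})+K$. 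The subtle point is that the stray additive constant $K$ (from the potential at $t=0$) and the $O(h^{N\beta-2})$ term must be absorbed into $\tfrac{\delta}{2}h^{-\alpha}$; since $\alpha>0$ we have $h^{-\alpha}\to\infty$, so for $h<h_0(\varepsilon)$ small enough (depending on $K$ and $\delta$) indeed $K+O(h^{N\beta-2})<\tfrac{\delta}{2}h^{-\alpha}$. Hence $J_h(|\psi(t,x)|)<m(h,\Omega)+\delta h^{-\alpha}$ for all $t\ge0$, and Lemma 22 applies.

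The main obstacle I anticipate is bookkeeping the powers of $h$ so that every correction term is genuinely $o(h^{-\alpha})$: the bound on $w$ is in the $H_0^1(\Omega)$ norm while the energy $J_h$ carries the $h^2/2$ in front of the gradient and the $h^{-N\beta}$ normalization, so one must track how $\|w\|_{H_0^1(\Omega)}<Kh^{\alpha}$ interacts with these weights, and use $2<p<2+4/N$ (equivalently the Gagliardo–Nirenberg exponent constraint) to keep the nonlinear term subcritical in the rescaled picture — exactly the reason this $p$-restriction was imposed in Section 3. The time-propagation part is then immediate from conservation of energy, so the whole proposition rests on this single quantitative energy estimate at $t=0$ combined with Lemma 22.
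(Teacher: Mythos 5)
Your proposal follows essentially the same route as the paper: reduce to Lemma 22 via the energy bound $J_h(|\psi(t,x)|)<m(h,\Omega)+\delta h^{-\alpha}$, obtained by (i) a mean-value/Lipschitz estimate giving $J_h(U+w)\le m(h,\Omega)+CKh^{\alpha}$ from $\|w\|_{H_0^1(\Omega)}<Kh^{\alpha}$, (ii) bounding the phase and potential contributions by $O(h^{N\beta})$ in the energy, and (iii) conservation of energy plus nonnegativity of the discarded terms, absorbing all corrections into $\delta h^{-\alpha}$ for $h<h_0$. This is exactly the paper's argument, up to immaterial differences in the bookkeeping of the $h$-powers.
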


\begin{proof}
Because of conservation of energy, we have%
\begin{eqnarray*}
E(\psi (t,x)) &=&E(\psi (0,x)) \\
&=&h^{N\beta }J_{h}(u_{h}(0,x))+\int \nolimits_{\Omega }u_{h}^{2}(0,x)\left[ 
\frac{\left \vert \nabla s_{h}(0,x)\right \vert ^{2}}{2}+V(x)\right] dx \\
&\leq &h^{N\beta }J_{h}(u_{h}(0,x))+\frac{K^{2}}{2}h^{N\beta }+Kh^{N\beta }
\\
&=&h^{N\beta }J_{h}(u_{h}(0,x))+Ch^{N\beta } \\
&=&h^{N\beta }J_{h}(U+w)+Ch^{N\beta } \\
&\leq &h^{N\beta }(m(h,\Omega )+C^{\prime }Kh^{\alpha }+C) \\
&=&h^{N\beta }\left[ m(h,\Omega )+h^{-\alpha }\left( h^{2\alpha }C^{\prime
}K+Ch^{\alpha }\right) \right] \text{, }
\end{eqnarray*}%
since a Mean Value Theorem application ensures that $J_{h}(U+w)\leq
m(h,\Omega )+C^{\prime }Kh^{\alpha }$, for suitably small $h$. More
precisely, since $J_{h}$ is $C^{1}$, we may find some $\eta \in (0,1)$ so
that%
\begin{eqnarray*}
J_{h}(U+w)-J_{h}(U) &=&J_{h}^{\prime }(U+\eta w)\left[ w\right] \\
&=&\int \nolimits_{\Omega }\nabla (U+\eta w)\nabla wdx \\
&&+\int \nolimits_{\Omega }W^{\prime }(U+\eta w)wdx\text{.}
\end{eqnarray*}%
We have%
\begin{equation}
\left \vert \int \nolimits_{\Omega }\nabla (U+\eta w)\nabla wdx\right \vert
\leq \left \Vert \nabla U\right \Vert _{L^{2}(\Omega )}\left \Vert \nabla
w\right \Vert _{L^{2}(\Omega )}+\left \Vert \nabla w\right \Vert
_{L^{2}(\Omega )}^{2}\leq C_{1}\left \Vert w\right \Vert _{H_{0}^{1}(\Omega
)}\text{,}  \label{48}
\end{equation}%
since $\left \Vert w\right \Vert _{H_{0}^{1}(\Omega )}\leq 1$. Because of
Condition 2, 
\begin{equation}
\left \vert \int \nolimits_{\Omega }W^{\prime }(U+\eta w)wdx\right \vert
\leq C_{2}\left \Vert w\right \Vert _{L^{p}(\Omega )}\leq C_{3}\left \Vert
w\right \Vert _{H_{0}^{1}(\Omega )}\text{.}  \label{49}
\end{equation}%
Combining (\ref{48}) and (\ref{49}), we obtain the desired inequality. Thus%
\begin{eqnarray*}
J_{h}(u_{h}(t,x)) &=&h^{-N\beta }\left \{ E(\psi (t,x))-\int
\nolimits_{\Omega }u_{h}^{2}(t,x)\left[ \frac{\left \vert \nabla
s_{h}(t,x)\right \vert ^{2}}{2}+V(x)\right] dx\right \} \\
&\leq &\left[ m(h,\Omega )+h^{-\alpha }\left( h^{2\alpha }C^{\prime
}K+Ch^{\alpha }\right) \right] \text{,}
\end{eqnarray*}%
since $V(x)\geq 0$. We use now Lemma 22, by choosing $h_{0}$ small enough in
order to ensure $h_{0}^{2\alpha }C^{\prime }K+Ch_{0}^{\alpha }\leq \delta
(\varepsilon )$, and the result follows.
\end{proof}

\begin{corollary}
Given $\varepsilon >0$, if $q(t)$ is the barycenter of a solution $\psi
(t,x) $ of (\ref{h}) with $C_{h}(\left \vert \psi (t,x)\right \vert )=1$,
and with initial data $\psi (0,x)\in B_{h}^{K}$, where $K$ is a positive
fixed number, then for any $t\in 
\mathbb{R}
_{0}^{+}$, $q(t)\in B(\widehat{q}_{h},d\varepsilon +h^{\beta }R)$, where $h$
and $R$ are as in Proposition 23, and $d=diam(\Omega )$.
\end{corollary}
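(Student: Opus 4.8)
The plan is to deduce the corollary directly from Proposition 23 by a plain triangle-inequality estimate on the barycenter integral, with no new analytic input. Fix $\varepsilon>0$ and apply Proposition 23: it produces $h_0=h_0(\varepsilon)>0$ and $R=R(\varepsilon)>0$ such that, for every $0<h<h_0$, there is a ball $B:=B(\widehat q_h,h^{\beta}R)\subset\Omega$ with
\[
\frac{1}{h^{N\beta}}\int_{\Omega\setminus B}\left\vert\psi(t,x)\right\vert^{2}dx<\varepsilon\qquad\text{for all }t\in\mathbb{R}_0^{+},
\]
whenever $\psi(t,x)$ solves (\ref{h}) with $C_h(\left\vert\psi(t,x)\right\vert)=1$ and $\psi(0,x)\in B_h^{K}$. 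These are exactly the hypotheses of the corollary, so the displayed concentration estimate is available throughout.

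Next I would rewrite the defining relation (\ref{34}) for $q(t)$ in centered form. Since $C_h(\left\vert\psi(t,x)\right\vert)=1$ means $\int_{\Omega}\left\vert\psi(t,x)\right\vert^{2}dx=h^{N\beta}$, we get
\[
q(t)-\widehat q_h=\frac{1}{h^{N\beta}}\int_{\Omega}(x-\widehat q_h)\left\vert\psi(t,x)\right\vert^{2}dx .
\]
I would then split this integral over $B$ and over $\Omega\setminus B$. On $B$ one has $\left\vert x-\widehat q_h\right\vert\le h^{\beta}R$ and $\int_{B}\left\vert\psi(t,x)\right\vert^{2}dx\le h^{N\beta}$; on $\Omega\setminus B$ one has $\left\vert x-\widehat q_h\right\vert\le\operatorname{diam}(\Omega)=d$ (this uses $\widehat q_h\in\Omega$, which holds because $B\subset\Omega$), while $\int_{\Omega\setminus B}\left\vert\psi(t,x)\right\vert^{2}dx<\varepsilon h^{N\beta}$ by the estimate above. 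Combining,
\[
\left\vert q(t)-\widehat q_h\right\vert\le\frac{1}{h^{N\beta}}\left( h^{\beta}R\cdot h^{N\beta}+d\cdot\varepsilon h^{N\beta}\right)=h^{\beta}R+d\varepsilon ,
\]
so $q(t)\in B(\widehat q_h,d\varepsilon+h^{\beta}R)$ for every $t$, which is the claim. The inclusion is in fact into the open ball: writing $c:=h^{-N\beta}\int_{\Omega\setminus B}\left\vert\psi(t,x)\right\vert^{2}dx\in[0,\varepsilon)$, the bound refines to $h^{\beta}R(1-c)+dc=h^{\beta}R+c(d-h^{\beta}R)<h^{\beta}R+d\varepsilon$, since $2h^{\beta}R\le d$ forces $d-h^{\beta}R>0$.

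I do not expect any genuine obstacle here: all the substantive work sits in Proposition 23 (and, upstream, in Lemmas 21--22 and the concentration lemma imported from \cite{Ben-Ghi-Mich}), and what remains is a routine two-region estimate. The only point deserving a moment's attention is the geometric remark that $\widehat q_h\in\Omega$, so that $\left\vert x-\widehat q_h\right\vert$ is controlled by $\operatorname{diam}(\Omega)$ uniformly in $x\in\Omega$; this is immediate from $B(\widehat q_h,h^{\beta}R)\subset\Omega$, and it is what justifies replacing the contribution of the low-mass region $\Omega\setminus B$ by the crude bound $d\varepsilon$.
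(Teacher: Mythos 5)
Your proposal is correct and follows essentially the same route as the paper: the paper's proof is exactly the two-region split of $h^{-N\beta}\int_{\Omega}\vert \widehat{q}_{h}-x\vert \,\vert\psi(t,x)\vert^{2}dx$ over $B(\widehat{q}_{h},h^{\beta}R)$ and its complement, bounding the two pieces by $h^{\beta}R$ and $d\varepsilon$ respectively via Proposition 23. Your additional remark on strict inequality is a harmless refinement not present in (and not needed for) the paper's argument.
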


\begin{proof}
One has for any $t\in 
\mathbb{R}
_{0}^{+}$, 
\begin{eqnarray*}
\left \vert \widehat{q}_{h}-q(t)\right \vert &=&\left \vert \frac{\int
\nolimits_{\Omega }\left( \widehat{q}_{h}-x\right) \left \vert \psi
(t,x)\right \vert ^{2}dx}{\int \nolimits_{\Omega }\left \vert \psi
(t,x)\right \vert ^{2}dx}\right \vert =h^{-N\beta }\left \vert \int
\nolimits_{\Omega }\left( \widehat{q}_{h}-x\right) \left \vert \psi
(t,x)\right \vert ^{2}dx\right \vert \\
&\leq &h^{-N\beta }\int \nolimits_{\Omega }\left \vert \widehat{q}%
_{h}-x\right \vert \left \vert \psi (t,x)\right \vert ^{2}dx \\
&=&h^{-N\beta }\left[ 
\begin{array}{c}
\int \nolimits_{\Omega \backslash B(\widehat{q}_{h},h^{\beta }R)}\left
\vert \widehat{q}_{h}-x\right \vert \left \vert \psi (t,x)\right \vert ^{2}dx
\\ 
+\int \nolimits_{B(\widehat{q}_{h},h^{\beta }R)}\left \vert \widehat{q}%
_{h}-x\right \vert \left \vert \psi (t,x)\right \vert ^{2}dx%
\end{array}%
\right] \\
&<&h^{-N\beta }\left[ dh^{N\beta }\varepsilon +h^{\beta }Rh^{N\beta }\right]
=d\varepsilon +h^{\beta }R\text{,}
\end{eqnarray*}%
thus proving the claim.

\begin{remark}
The above Corollary may be interpreted as follows: If we choose a pretty
small $\varepsilon >0$, and we make a consequent choice of small $h$, then $%
B(\widehat{q}_{h},h^{\beta }R)\subset A_{h}=B(\widehat{q}_{h},d\varepsilon
+h^{\beta }R)\cap \Omega $, with $meas(\Omega -A_{h})>0$. In other words, we
have found a neighborhood of $q(t)$, \textbf{for any }$t\in 
\mathbb{R}
_{0}^{+}$, as in Def. 10, in the sense that a solution $\psi (t,x)$ of (\ref%
{h}) with a perturbed initial state, meeting the requirements of Proposition
23, concentrates basically on this neighborhood of $q(t)$, exhibiting a
behavior quite similar to a solitary wave. \ 
\end{remark}
\end{proof}

\section{$H^{1}$\ localization for $V\neq 0$}

\subsection{The case\ $\Omega =%
\mathbb{R}
^{N}$\  \  \ }

\  \ We start with the following assumption: The problem 
\begin{equation*}
ih\frac{\partial \psi }{\partial t}=-\frac{h^{2}}{2}\Delta \psi +\frac{1}{%
2h^{\alpha }}W^{\prime }(\left \vert \psi \right \vert )\frac{\psi }{\left
\vert \psi \right \vert }+V(x)\psi \text{ ,}
\end{equation*}%
\begin{equation}
\psi (0,x)=\phi _{h}(x)\text{,}  \label{ph}
\end{equation}%
\begin{equation*}
\left \vert \psi (t,x)\right \vert \rightarrow 0\text{, as }\left \vert
x\right \vert \rightarrow \infty \text{,}
\end{equation*}%
considered as the $%
\mathbb{R}
^{N}$ analogue of (\ref{h}) admits a unique solution%
\begin{equation}
\psi \in C^{0}(%
\mathbb{R}
,\text{ }H^{2}(%
\mathbb{R}
^{N}))\cap C^{1}(%
\mathbb{R}
,L^{2}(%
\mathbb{R}
^{N}))  \label{c1}
\end{equation}%
(see \cite{Ka}, \cite{Caz} or \cite{Gi-Ve} for sufficient conditions.)\ We
also impose on $W$, $V$ further conditions, namely:

\begin{condition}
$W$ is $C^{3}$, with $\left \vert W^{\prime \prime }\right \vert \leq c\left
\vert s\right \vert ^{p-2}$\ for some $c\geq 0$, $2<p<2+\frac{4}{N}$\  \ 
\end{condition}

\begin{condition}
$V:$ $%
\mathbb{R}
^{N}\rightarrow 
\mathbb{R}
$ is a $C^{0}$ nonnegative\ function
\end{condition}

In order to proceed, we will need the following lemma:

\begin{lemma}
For every $\varepsilon >0$, there exists $R=R(\varepsilon )>0$ such that for
every ground state $U$, there exists $q(U)\in 
\mathbb{R}
^{N}$ such that 
\begin{equation*}
\int \nolimits_{%
\mathbb{R}
^{N}\backslash B(q(U),R)}\left( \left \vert \nabla U\right \vert
^{2}+U^{2}\right) dx<\varepsilon
\end{equation*}
\end{lemma}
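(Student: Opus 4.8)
The plan is to argue by contradiction using concentration-compactness, exploiting that every ground state $U$ solves the $\mathbb{R}^N$ version of \eqref{s} and that, by Lemma 17, the minimal level $m_\sigma(\mathbb{R}^N)$ is attained and is strictly negative. Suppose the statement fails: then there is $\varepsilon_0>0$ and a sequence of ground states $U_n$ (so $\|U_n\|_{L^2}=\sigma$, $J(U_n)=m_\sigma(\mathbb{R}^N)$, and $-\Delta U_n + W'(U_n)=\lambda_n U_n$) such that for every $R>0$ and every $q\in\mathbb{R}^N$ one has $\int_{\mathbb{R}^N\setminus B(q,R)}(|\nabla U_n|^2+U_n^2)\,dx\ge\varepsilon_0$ (with $R=n$, say). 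Since $J(U_n)=m_\sigma(\mathbb{R}^N)$ and $W$ is bounded, the sequence $\{U_n\}$ is bounded in $H^1(\mathbb{R}^N)$; hence the nonnegative measures $\rho_n=(|\nabla U_n|^2+U_n^2)\,dx$ have bounded total mass, and we may apply the concentration-compactness lemma to (a normalization of) $\rho_n$. The failure of the lemma, quantified as above, is precisely designed to rule out the \emph{compactness} (tightness up to translation) alternative: no single ball of any radius, centered anywhere, captures all but $\varepsilon_0$ of the mass. So we must land in either \emph{vanishing} or \emph{dichotomy}.

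First I would exclude vanishing. If $\sup_{q}\int_{B(q,1)}U_n^2\,dx\to 0$, then by the classical vanishing lemma $U_n\to 0$ strongly in $L^t(\mathbb{R}^N)$ for every $2<t<2^\ast$; combined with Condition 2 ($|W'(s)|\le c|s|^{p-1}$ with $2<p<2+\tfrac4N<2^\ast$) this forces $\int W(U_n)\,dx\to 0$ (using also $W(0)=0$, which follows from $W$ being even and $C^1$ together with the growth bound, or is built into the normalization), whence $J(U_n)=\tfrac12\|\nabla U_n\|_2^2+o(1)\ge o(1)$, contradicting $m_\sigma(\mathbb{R}^N)=J(U_n)<0$. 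Next I would exclude dichotomy: if the mass splits as $\alpha\in(0,\sigma^2)$ and $\sigma^2-\alpha$ into two pieces drifting infinitely apart, a standard cut-off argument (as in Lemma 18 of this excerpt, using smooth cut-offs with gradient $O(1/r_n)$) produces two functions $U_n^{(1)},U_n^{(2)}$ with disjoint far-apart supports, $\|U_n^{(i)}\|_{L^2}^2\to\alpha_i$, and $J(U_n)\ge J(U_n^{(1)})+J(U_n^{(2)})+o(1)$; after rescaling each piece to mass $\sigma$ (and controlling the $W$-terms via Condition 2 exactly as in the Mean Value Theorem estimates used in Proposition 23), the strict subadditivity $m_\sigma(\mathbb{R}^N)<m_{\sqrt{\alpha_1}}+m_{\sqrt{\alpha_2}}$—or its appropriate scaled form—contradicts $J(U_n)=m_\sigma(\mathbb{R}^N)$. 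Hence compactness up to translation must hold: there are $q_n\in\mathbb{R}^N$ with $U_n(\cdot+q_n)$ tight in $L^2$, and then, using the equation to upgrade $L^2$-tightness to $H^1$-tightness (testing against $U_n$ outside a large ball and using the $\lambda_n$-boundedness, which follows as in Proposition 4), we get the required $R=R(\varepsilon)$ uniformly over all ground states, contradicting the assumed failure.

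The main obstacle I anticipate is making the dichotomy exclusion genuinely \emph{uniform over the whole family of ground states}: the statement quantifies over \emph{every} ground state with a single $R(\varepsilon)$, so the contradiction sequence must be allowed to be any sequence of ground states, and the subadditivity inequality for the (possibly non-unique, non-translation-invariant) ground-state level has to be handled carefully—here one genuinely uses that the problem is posed on all of $\mathbb{R}^N$ (translation invariance of $J$ and of the $L^2$ constraint on $\mathbb{R}^N$), which is what makes "up to translation'' the right notion and what makes strict subadditivity available from Lemma 17's strict negativity of $m_\sigma(\mathbb{R}^N)$. A cleaner alternative, if the cut-off bookkeeping becomes heavy, is to invoke directly the analogue already cited in this paper—Lemma 15 of \cite{Ben-Ghi-Mich}, which gives the $L^2$ version of exactly this concentration statement—and then bootstrap from the $L^2$ concentration of $U_n$ to the $H^1$ concentration by multiplying the equation $-\Delta U + W'(U)=\lambda U$ by $U\,(1-\chi_R^2)$ and integrating, so that $\int_{|x-q|>R}|\nabla U|^2$ is controlled by $\int_{|x-q|>R}(|W'(U)|\,|U|+|\lambda|U^2) + (\text{boundary terms}\ O(1/R))$, which is small by the $L^2$ tail bound, Condition 2, and the uniform bound on $|\lambda|$.
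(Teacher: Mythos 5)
Your proposal matches the paper's proof in essence: both argue by contradiction with a sequence of ground states $U_n$ and radii $R_n\to\infty$, invoke concentration--compactness to conclude that this minimizing sequence is relatively compact in $H^1(\mathbb{R}^N)$ up to translations $q_n$, and reach a contradiction because the tails of $U_n(\cdot-q_n)$ outside $B(0,R_n)$ are then $o_n(1)$. The only difference is one of detail: the paper simply cites the relative compactness of minimizing sequences, whereas you unpack the Lions scheme (excluding vanishing via $m_\sigma(\mathbb{R}^N)<0$ and dichotomy via strict subadditivity) and the $L^2$-to-$H^1$ upgrade explicitly.
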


\begin{proof}
If we assume the contrary, then we may find $\varepsilon >0$ such that we
may have a sequence of pairs $(R_{n}>0,U_{n}$ ground state$)$ so that for
each $q\in 
\mathbb{R}
^{N}$ 
\begin{equation*}
\int \nolimits_{%
\mathbb{R}
^{N}\backslash B(q(U),R_{n})}\left( \left \vert \nabla U_{n}\right \vert
^{2}+U_{n}^{2}\right) dx\geq \varepsilon \text{,}
\end{equation*}%
thus obtaining%
\begin{equation}
\underset{q\in 
\mathbb{R}
^{N}}{\inf }\int \nolimits_{%
\mathbb{R}
^{N}\backslash B(q(U),R_{n})}\left( \left \vert \nabla U_{n}\right \vert
^{2}+U_{n}^{2}\right) dx\geq \varepsilon \text{.}  \label{p1}
\end{equation}%
Then $\left \{ U_{n}\right \} _{n}$ is a minimizing sequence, and by
concentration compactness we know that $\left \{ U_{n}\right \} _{n}$ is
relatively compact up to a translation by $\left \{ q_{n}\right \} _{n}$ $%
\in 
\mathbb{R}
^{N}$. Thus there exists a ground state $U$ with $U_{n}(\cdot -$ $%
q_{n})\rightarrow U$ in $H^{1}(%
\mathbb{R}
^{N})$, and%
\begin{eqnarray*}
&&\underset{q\in 
\mathbb{R}
^{N}}{\inf }\int \nolimits_{%
\mathbb{R}
^{N}\backslash B(q(U),R_{n})}\left( \left \vert \nabla U_{n}\right \vert
^{2}+U_{n}^{2}\right) dx \\
&\leq &\int \nolimits_{%
\mathbb{R}
^{N}\backslash B(-q_{n},R_{n})}\left( \left \vert \nabla U_{n}\right \vert
^{2}+U_{n}^{2}\right) dx \\
&=&\int \nolimits_{%
\mathbb{R}
^{N}\backslash B(0,R_{n})}(\left \vert \nabla U_{n}\right \vert
^{2}+U_{n}^{2})(x-q_{n})dx \\
&=&\int \nolimits_{%
\mathbb{R}
^{N}\backslash B(0,R_{n})}\left( \left \vert \nabla U\right \vert
^{2}+U^{2}\right) dx+o_{n}(1)=o_{n}(1)\text{,}
\end{eqnarray*}%
contradicting (\ref{p1}).
\end{proof}

\begin{lemma}
For every $\varepsilon >0$, there exist $\widehat{R}=\widehat{R}(\varepsilon
)>0$, $\delta $ $=\delta (\varepsilon )>0$ such that, for any $u\in
J^{m+\delta }\cap S_{\sigma }$, we can find a point $\widehat{q}=\widehat{q}%
(u)\in 
\mathbb{R}
^{N}$ such that%
\begin{equation}
\frac{\int \nolimits_{%
\mathbb{R}
^{N}\backslash B(\widehat{q},\widehat{R})}\left \vert \nabla u\right \vert
^{2}(x)dx}{\int \nolimits_{%
\mathbb{R}
^{N}}\left \vert \nabla u\right \vert ^{2}(x)dx}<\varepsilon \text{,}
\label{p2}
\end{equation}%
where $m=m(%
\mathbb{R}
^{N})$ (see Lemma 21), $J^{m+\delta }=\left \{ u\in H^{1}(%
\mathbb{R}
^{N})/J(u)<m+\delta \right \} $.
\end{lemma}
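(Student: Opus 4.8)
The plan is to argue by contradiction, reducing the claim to the Cazenave--Lions concentration--compactness description of minimizing sequences of $J$ on $S_{\sigma}$, exactly in the spirit of the proof of the preceding lemma (and of Lemmas 21--22). Assume the assertion fails for some $\varepsilon>0$. Then, choosing $\widehat{R}=n$ and $\delta=\frac{1}{n}$, we get for each $n$ a function $u_{n}\in S_{\sigma}$ with $J(u_{n})<m+\frac{1}{n}$ such that, for \emph{every} $q\in\mathbb{R}^{N}$,
\[
\int\nolimits_{\mathbb{R}^{N}\setminus B(q,n)}\left\vert \nabla u_{n}\right\vert^{2}dx\;\geq\;\varepsilon\int\nolimits_{\mathbb{R}^{N}}\left\vert \nabla u_{n}\right\vert^{2}dx .
\]
In particular $\{u_{n}\}$ is a minimizing sequence for $m=m_{\sigma}(\mathbb{R}^{N})=\inf_{S_{\sigma}}J$.

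First I would check that $\{u_{n}\}$ is bounded in $H^{1}(\mathbb{R}^{N})$; this is precisely where the restriction $2<p<2+\frac{4}{N}$ is used. By Condition 2 one has $\int_{\mathbb{R}^{N}}W(u)\geq-\frac{c}{p}\Vert u\Vert_{L^{p}}^{p}$, and Gagliardo--Nirenberg bounds $\Vert u\Vert_{L^{p}}^{p}$ by a power of $\Vert\nabla u\Vert_{L^{2}}$ strictly smaller than $2$ when $\Vert u\Vert_{L^{2}}=\sigma$ is fixed; hence $J(u)\to+\infty$ as $\Vert\nabla u\Vert_{L^{2}}\to\infty$, so $J(u_{n})$ bounded forces $\{u_{n}\}$ bounded in $H^{1}(\mathbb{R}^{N})$. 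Then I would invoke the concentration--compactness argument used in the proof of the preceding lemma: vanishing is excluded because $m<0$ (ground states exist and have negative energy, Lemma 17, which is the point of Condition 12) and dichotomy is excluded by strict subadditivity of $\sigma\mapsto m_{\sigma}(\mathbb{R}^{N})$, so that, up to a subsequence, there are points $q_{n}\in\mathbb{R}^{N}$ and a ground state $U$ with $v_{n}:=u_{n}(\cdot+q_{n})\to U$ strongly in $H^{1}(\mathbb{R}^{N})$. Note $G:=\Vert\nabla U\Vert_{L^{2}}^{2}>0$, for otherwise $U$ is constant, hence $0$, contradicting $J(U)=m<0$; in particular $\Vert\nabla u_{n}\Vert_{L^{2}}^{2}=\Vert\nabla v_{n}\Vert_{L^{2}}^{2}\to G$.

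To reach the contradiction I would now use the preceding lemma applied to $U$ (or simply that $U\in H^{1}(\mathbb{R}^{N})$): fix $R_{0}>0$ with $\int_{\mathbb{R}^{N}\setminus B(0,R_{0})}\left\vert \nabla U\right\vert^{2}dx<\frac{\varepsilon}{2}G$. For $n\geq R_{0}$, translation invariance of the Lebesgue integral gives
\[
\int\nolimits_{\mathbb{R}^{N}\setminus B(q_{n},n)}\left\vert \nabla u_{n}\right\vert^{2}dx=\int\nolimits_{\mathbb{R}^{N}\setminus B(0,n)}\left\vert \nabla v_{n}\right\vert^{2}dx\leq\int\nolimits_{\mathbb{R}^{N}\setminus B(0,R_{0})}\left\vert \nabla v_{n}\right\vert^{2}dx ,
\]
and the last integral tends to $\int_{\mathbb{R}^{N}\setminus B(0,R_{0})}\left\vert \nabla U\right\vert^{2}dx<\frac{\varepsilon}{2}G$ by strong $H^{1}$ convergence. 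Together with $\Vert\nabla u_{n}\Vert_{L^{2}}^{2}\to G>0$ this yields, for $n$ large, $\int_{\mathbb{R}^{N}\setminus B(q_{n},n)}\left\vert \nabla u_{n}\right\vert^{2}dx<\varepsilon\Vert\nabla u_{n}\Vert_{L^{2}}^{2}$, contradicting the displayed inequality with $q=q_{n}$. Running the argument forward produces $\widehat{R}(\varepsilon)$, $\delta(\varepsilon)$ and, for each admissible $u$, the concentration center $\widehat{q}(u)$.

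The main obstacle I anticipate is the concentration--compactness step itself -- in particular having at hand the strict subadditivity of $m_{\sigma}(\mathbb{R}^{N})$ needed to rule out dichotomy, and bookkeeping the translated centers $q_{n}$ against the radii $n\to\infty$. Everything else is routine once Conditions 1--2, Lemma 17 and the preceding lemma are available.
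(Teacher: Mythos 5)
Your proof is correct, but it takes a genuinely different route from the paper. The paper argues directly: it invokes Rel.~(50) of the cited work of Benci--Ghimenti--Micheletti to write any $u\in J^{m+\delta}\cap S_{\sigma}$ as $u=U(\cdot-\widehat{q})+w$ with $U$ a ground state and $\left\Vert w\right\Vert _{H^{1}}\leq C\varepsilon$, and then feeds this into the \emph{uniform} tail estimate for ground states established in the preceding lemma (Lemma 28); the uniformity over all ground states is essential there because the $U$ produced by the decomposition varies with $u$. You instead argue by contradiction, extract from the pairs $(\widehat{R},\delta)=(n,1/n)$ a minimizing sequence violating the gradient-concentration bound at scale $n$, and apply the Cazenave--Lions compactness of minimizing sequences up to translation to obtain a single limiting ground state $U$, against which the contradiction is elementary (your coercivity argument via Gagliardo--Nirenberg and the verification that $\left\Vert \nabla U\right\Vert _{L^{2}}^{2}>0$ are both sound, and your bookkeeping of the centers $q_{n}$ against the radii $n$ is correct). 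What the paper's route buys is that it quarantines all the compactness analysis inside the quoted decomposition result and inside Lemma 28, yielding a short quantitative proof; what your route buys is independence from that external decomposition, at the price of invoking relative compactness of \emph{arbitrary} minimizing sequences (not just sequences of minimizers, as in the paper's Lemma 28) together with the strict subadditivity of $\sigma\mapsto m_{\sigma}(\mathbb{R}^{N})$ needed to exclude dichotomy. Since the paper already relies on exactly this Cazenave--Lions machinery elsewhere (it is the engine behind Lemma 17 and behind the proof of Lemma 28), and since the hypotheses in force here ($W$ bounded, $|W'(s)|\leq c|s|^{p-1}$ with $2<p<2+\frac{4}{N}$, $W(s_{0})<0$) are precisely those under which that machinery applies, I regard your appeal to it as legitimate rather than a gap; you correctly identified it as the one nontrivial input. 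One cosmetic remark: your lower bound $\int W(u)\geq-\frac{c}{p}\left\Vert u\right\Vert _{L^{p}}^{p}$ tacitly uses $W(0)=0$, which the paper only states explicitly in the Appendix but which is implicit throughout the $\mathbb{R}^{N}$ analysis.
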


\begin{proof}
Exploiting Rel. (50) in \cite{Ben-Ghi-Mich}, we obtain a point $\widehat{q}=%
\widehat{q}(u)\in 
\mathbb{R}
^{N}$ and a radial ground state solution $U$ such that 
\begin{equation}
u(x)=U(x-\widehat{q})+w\text{, with }\left \Vert w\right \Vert _{H^{1}(%
\mathbb{R}
^{N})}\leq C\varepsilon \text{,}  \label{p3}
\end{equation}%
where $C$ is a constant not depending on $U$. According to the previous
Lemma, we may find $R>0$ and a point $q=q(U)$ such that\  \  \  \  \  \  \  \  \  \  \
\  \  \  \  \  \  \  \  \  \  \  \  \  \  \  \  \  \  \  \  \  \  \  \  \  \  \  \  \  \  \  \  \  \  \  \  \  \
\  \  \  \  \  \  \  \  \  \  \  \  \  \  \  \  \  \  \  \  \  \  \  \  \  \  \  \  \  \  \  \  \  \  \  \  \  \
\  \  \  \  \  \  \  \  \  \  \  \  \  \  \  \ 
\begin{equation}
\int \nolimits_{%
\mathbb{R}
^{N}\backslash B(q,R)}\left( \left \vert \nabla U\right \vert
^{2}+U^{2}\right) dx<\frac{\sigma ^{2}C\varepsilon }{c_{1}^{2}}\text{,}
\label{p4}
\end{equation}%
where $c_{1}$ is the Sobolev constant related to the embedding $H^{1}(%
\mathbb{R}
^{N})\rightarrow L^{2}(%
\mathbb{R}
^{N})$. If we choose $\widehat{R}$ big enough, then $B(q,R)\subset B(0,%
\widehat{R})$, resulting to 
\begin{equation}
\int \nolimits_{%
\mathbb{R}
^{N}\backslash B(0,\widehat{R})}\left( \left \vert \nabla U\right \vert
^{2}+U^{2}\right) dx<\frac{\sigma ^{2}C\varepsilon }{c_{1}^{2}}\text{.}
\label{p5}
\end{equation}%
We have%
\begin{equation}
\frac{\int \nolimits_{%
\mathbb{R}
^{N}\backslash B(0,\widehat{R})}\left \vert \nabla U\right \vert ^{2}(x)dx}{%
\int \nolimits_{%
\mathbb{R}
^{N}}\left \vert \nabla U\right \vert ^{2}(x)dx}<\frac{c_{1}^{2}\int
\nolimits_{%
\mathbb{R}
^{N}\backslash B(0,\widehat{R})}\left( \left \vert \nabla U\right \vert
^{2}+U^{2}\right) dx}{\int \nolimits_{%
\mathbb{R}
^{N}}U^{2}dx}<C\varepsilon \text{.}  \label{p6}
\end{equation}%
Now%
\begin{eqnarray}
\int \nolimits_{%
\mathbb{R}
^{N}\backslash B(\widehat{q},\widehat{R})}\left \vert \nabla u\right \vert
^{2}(x)dx &<&\int \nolimits_{%
\mathbb{R}
^{N}\backslash B(\widehat{q},\widehat{R})}\left \vert \nabla U\right \vert
^{2}(x-\widehat{q})dx  \notag \\
&&+\int \nolimits_{%
\mathbb{R}
^{N}\backslash B(\widehat{q},\widehat{R})}\left( \left \vert \nabla w\right
\vert ^{2}+2wU\right) dx  \notag \\
&=&\int \nolimits_{%
\mathbb{R}
^{N}\backslash B(0,\widehat{R})}\left \vert \nabla U\right \vert ^{2}(x)dx 
\notag \\
&&+\int \nolimits_{%
\mathbb{R}
^{N}\backslash B(\widehat{q},\widehat{R})}\left( \left \vert \nabla w\right
\vert ^{2}+2\nabla w\nabla U\right) dx\text{.}  \label{p8}
\end{eqnarray}%
By (\ref{p3}), (\ref{p6}) and (\ref{p8}), we get the claim. One should
notice that $\widehat{R}$ does not depend on $u$, $U$.\ 
\end{proof}

Our main objective in this subsection is to prove an $H^{1}$ modular
stability result of the solution of (\ref{ph}) with suitable initial data;
more precisely, we prove that, for fixed $t\in 
\mathbb{R}
_{0}^{+}$, this solution is a function on $%
\mathbb{R}
^{N}$\ with one peak localized in a ball with its center depending on $t$
whereas its radius not. To this end, it is sufficient to assume that (\ref%
{ph}) admits global solutions $\psi (t,x)\in C(%
\mathbb{R}
,H^{1}(%
\mathbb{R}
^{N}))$ satisfying the conservation of the energy and of the $L^{2}$ norm.

Given $K>0$, $h>0$, we define the following set of admissible data: 
\begin{equation}
B_{K,h}=\left \{ 
\begin{tabular}{l}
$\psi (0,x)=u_{h}(0,x)e^{\frac{i}{h}s_{h}(0,x)}$ \\ 
with $u_{h}(0,x)=(U+w)\left( \frac{x-q}{h^{\beta }}\right) $ \\ 
$q\in 
\mathbb{R}
^{N}$, $U$ is a ground state solution, and $w\in H^{1}(%
\mathbb{R}
^{N})$ s.t. \\ 
$\left \Vert U+w\right \Vert _{L^{2}}=\left \Vert U\right \Vert
_{L^{2}}=\sigma $, and $\left \Vert w\right \Vert _{H^{1}}<Kh^{\alpha }$ \\ 
$\left \Vert \nabla s_{h}(0,x)\right \Vert _{L^{\infty }}\leq K$ for all $h$
\\ 
$\int \nolimits_{%
\mathbb{R}
^{N}}V(x)u_{h}^{2}(0,x)dx\leq Kh^{N\beta -2\alpha }$%
\end{tabular}%
\right \} \text{.}  \label{p9}
\end{equation}

We next study the rescaling properties of the internal energy 
\begin{equation*}
\widetilde{J}_{h}(u)=\int \nolimits_{%
\mathbb{R}
^{N}}\left( \frac{h^{2}}{2}\left \vert \nabla u\right \vert ^{2}+\frac{1}{%
h^{\alpha }}W(u)\right) dx\text{,}
\end{equation*}
and of the $L^{2}$ norm of a function $u(x)$ having the form%
\begin{equation*}
u(x)=v\left( \frac{x}{h^{\beta }}\right) \text{,}
\end{equation*}%
with $\beta =1+\frac{\alpha }{2}$. We have%
\begin{equation*}
\left \Vert u\right \Vert _{L^{2}}^{2}=\int \nolimits_{%
\mathbb{R}
^{N}}v\left( \frac{x}{h^{\beta }}\right) ^{2}dx=h^{N\beta }\int \nolimits_{%
\mathbb{R}
^{N}}v(\xi )^{2}d\xi =h^{N\beta }\left \Vert v\right \Vert _{L^{2}}^{2}\text{%
,}
\end{equation*}%
and%
\begin{eqnarray*}
\widetilde{J}_{h}(u) &=&\int \nolimits_{%
\mathbb{R}
^{N}}\left[ \frac{h^{2}}{2}\left \vert \nabla _{x}v\left( \frac{x}{h^{\beta }%
}\right) \right \vert ^{2}+\frac{1}{h^{\alpha }}W\left( v\left( \frac{x}{%
h^{\beta }}\right) \right) \right] dx \\
&=&\int \nolimits_{%
\mathbb{R}
^{N}}\left[ \frac{h^{(N-2)\beta +2}}{2}\left \vert \nabla _{\xi }v\left( \xi
\right) \right \vert ^{2}+h^{N\beta -\alpha }W\left( v\left( \xi \right)
\right) \right] d\xi \\
&=&h^{N\beta -\alpha }\int \nolimits_{%
\mathbb{R}
^{N}}\frac{1}{2}\left[ \left \vert \nabla _{\xi }v\left( \xi \right) \right
\vert ^{2}+W\left( v\left( \xi \right) \right) \right] =h^{N\beta -\alpha
}J(v)\text{.}
\end{eqnarray*}

We can now describe the concentration properties of the modulus of the
solution of (\ref{ph}).

\begin{lemma}
For any $\varepsilon >0$, there exist positive numbers $\delta =\delta
(\varepsilon )$, $\widehat{R}=\widehat{R}(\varepsilon )$ such that: for any $%
\psi (t,x)$ that solves (\ref{ph}), with $\left \vert \psi (t,h^{\beta
}x)\right \vert $ $\in J^{m+\delta }\cap S_{\sigma }$, for all $t$, there
exists a map $\widehat{q}_{h}:%
\mathbb{R}
_{0}^{+}\rightarrow $\ $%
\mathbb{R}
^{N}$ for which%
\begin{equation}
\frac{\int \nolimits_{%
\mathbb{R}
^{N}\backslash B(\widehat{q}_{h}(t),\widehat{R})}\left \vert \nabla
u_{h}(t,x)\right \vert ^{2}dx}{\int \nolimits_{%
\mathbb{R}
^{N}}\left \vert \nabla u_{h}(t,x)\right \vert ^{2}dx}<\varepsilon \text{.}
\label{p10}
\end{equation}
\end{lemma}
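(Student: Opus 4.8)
The plan is to deduce Lemma~30 from Lemma~29 by dilating, one time slice at a time; the substantive work (the concentration--compactness argument behind Lemma~28 and Lemma~29) has already been done, so what remains is essentially bookkeeping with the rescaling $\beta=1+\tfrac{\alpha}{2}$. Fix $\varepsilon>0$ and let $\widehat R_0=\widehat R_0(\varepsilon)>0$, $\delta=\delta(\varepsilon)>0$ be the radius and threshold furnished by Lemma~29. I claim that this $\delta(\varepsilon)$ together with $\widehat R(\varepsilon):=h^{\beta}\widehat R_0(\varepsilon)$ (with $h$ regarded as fixed throughout this subsection) satisfy the conclusion. Given a solution $\psi$ of (\ref{ph}) as in the statement, put $u_h(t,x):=|\psi(t,x)|$; by the assumed regularity $\psi(t,\cdot)\in H^1(\mathbb{R}^N)$, and since the modulus of an $H^1$ function again lies in $H^1$, we have $u_h(t,\cdot)\in H^1(\mathbb{R}^N)$ for every $t$. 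For each fixed $t\in\mathbb{R}_0^+$ introduce the dilated function
\[
 v_h(t,\xi):=u_h(t,h^{\beta}\xi)=\bigl|\psi(t,h^{\beta}\xi)\bigr|.
\]
The hypothesis says precisely that $v_h(t,\cdot)\in J^{m+\delta}\cap S_\sigma$ for every $t$, which is exactly the admissibility requirement of Lemma~29.

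First I would apply Lemma~29 to $v_h(t,\cdot)$, separately at each $t$. It produces, for each $t$, a point $q_\ast(t):=\widehat q\bigl(v_h(t,\cdot)\bigr)\in\mathbb{R}^N$ with
\[
 \frac{\displaystyle\int_{\mathbb{R}^N\setminus B(q_\ast(t),\widehat R_0)}\lvert\nabla_\xi v_h(t,\xi)\rvert^2\,d\xi}{\displaystyle\int_{\mathbb{R}^N}\lvert\nabla_\xi v_h(t,\xi)\rvert^2\,d\xi}<\varepsilon.
\]
No regularity of $t\mapsto q_\ast(t)$ is asked for, so selecting a point separately at each instant is harmless; the essential point is that the radius $\widehat R_0$ returned by Lemma~29 does not depend on the particular function it is applied to --- this is precisely the observation closing the proof of Lemma~29 --- so the same $\widehat R_0$ serves for all $t$ at once.

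Next I would transport this back to the variable $x=h^{\beta}\xi$. Writing $u_h(t,x)=v_h(t,x/h^{\beta})$, the change of variables $x=h^{\beta}\xi$ yields, for every Borel set $E\subset\mathbb{R}^N$,
\[
 \int_{E}\lvert\nabla_x u_h(t,x)\rvert^2\,dx=h^{(N-2)\beta}\int_{E/h^{\beta}}\lvert\nabla_\xi v_h(t,\xi)\rvert^2\,d\xi,
\]
so the ratio of the gradient energy over a ball to the gradient energy over the whole space is left unchanged when the $\xi$-ball $B(q_\ast(t),\widehat R_0)$ is replaced by the $x$-ball $B(h^{\beta}q_\ast(t),h^{\beta}\widehat R_0)$. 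Setting $\widehat q_h(t):=h^{\beta}q_\ast(t)$ and recalling $\widehat R=h^{\beta}\widehat R_0$, the displayed estimate of the previous paragraph becomes
\[
 \frac{\displaystyle\int_{\mathbb{R}^N\setminus B(\widehat q_h(t),\widehat R)}\lvert\nabla u_h(t,x)\rvert^2\,dx}{\displaystyle\int_{\mathbb{R}^N}\lvert\nabla u_h(t,x)\rvert^2\,dx}<\varepsilon
\]
for all $t\in\mathbb{R}_0^+$, with $\widehat q_h:\mathbb{R}_0^+\to\mathbb{R}^N$ the sought map, which is the assertion.

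I do not foresee a genuine obstacle: the real mathematical content is already encapsulated in Lemma~29, and here it is only a matter of rescaling. The three points that still demand a little care are (i) identifying the threshold $\delta$ and the radius of the present lemma with those produced by Lemma~29; (ii) tracking the powers of $h^{\beta}$ so that the gradient-energy quotient is seen to be invariant once the ball is dilated together with the function; and (iii) invoking the uniformity of $\widehat R_0$ in Lemma~29 to secure one radius valid simultaneously for every time slice. If one wanted the radius to be literally independent of $h$, one would just phrase the conclusion with the ball $B(\widehat q_h(t),h^{\beta}\widehat R_0)$, leaving the argument untouched.
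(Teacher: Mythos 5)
Your argument is essentially the paper's own: fix $t$, pass to the dilated modulus $v(\xi)=|\psi(t,h^{\beta}\xi)|$, apply Lemma 29 to each time slice (using that its radius is uniform over admissible functions), and undo the dilation, under which the gradient-energy quotient is invariant. The one slip is the bookkeeping of the radius: you set $\widehat R:=h^{\beta}\widehat R_{0}$, which depends on $h$, whereas the statement requires $\widehat R=\widehat R(\varepsilon)$ only, and your closing remark proposes keeping the $h$-dependent ball as the remedy, which is backwards. The repair is the one the paper makes explicit: since $h<1$ one has $h^{\beta}\widehat R_{0}\leq\widehat R_{0}$, hence $\mathbb{R}^{N}\setminus B(h^{\beta}\overline q,\widehat R_{0})\subset\mathbb{R}^{N}\setminus B(h^{\beta}\overline q,h^{\beta}\widehat R_{0})$, so enlarging the ball to the $h$-independent radius $\widehat R_{0}$ only decreases the numerator and the estimate survives with $\widehat R=\widehat R_{0}(\varepsilon)$.
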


\begin{proof}
For fixed $h$ and $t$, we set $v(\xi )=\left \vert \psi (t,h^{\beta }\xi
)\right \vert $.\ By Lemma 29, there exist $\delta >0$, $\widehat{R}>0$ and $%
\overline{q}=\overline{q}(v)$ such that: if $\left \vert \psi (t,h^{\beta
}x)\right \vert $ $\in J^{m+\delta }\cap S_{\sigma }$, then 
\begin{equation*}
\varepsilon >\frac{\int \nolimits_{%
\mathbb{R}
^{N}\backslash B(\overline{q},\widehat{R})}\left \vert \nabla _{\xi }v\left(
\xi \right) \right \vert ^{2}d\xi }{\int \nolimits_{%
\mathbb{R}
^{N}}\left \vert \nabla _{\xi }v\left( \xi \right) \right \vert ^{2}d\xi }%
\text{.}
\end{equation*}%
By a change of variables, we obtain%
\begin{eqnarray*}
\varepsilon &>&\frac{\int \nolimits_{%
\mathbb{R}
^{N}\backslash B(\overline{q},\widehat{R})}\left \vert \nabla _{\xi }v\left(
\xi \right) \right \vert ^{2}d\xi }{\int \nolimits_{%
\mathbb{R}
^{N}}\left \vert \nabla _{\xi }v\left( \xi \right) \right \vert ^{2}d\xi }=%
\frac{\int \nolimits_{%
\mathbb{R}
^{N}\backslash B(h^{\beta }\overline{q},h^{\beta }\widehat{R})}\left \vert
\nabla u_{h}(t,x)\right \vert ^{2}dx}{\int \nolimits_{%
\mathbb{R}
^{N}}\left \vert \nabla u_{h}(t,x)\right \vert ^{2}dx} \\
&>&\frac{\int \nolimits_{%
\mathbb{R}
^{N}\backslash B(h^{\beta }\overline{q},\widehat{R})}\left \vert \nabla
u_{h}(t,x)\right \vert ^{2}dx}{\int \nolimits_{%
\mathbb{R}
^{N}}\left \vert \nabla u_{h}(t,x)\right \vert ^{2}dx}\text{, since }h<1%
\text{.}
\end{eqnarray*}%
Setting $\widehat{q}_{h}(t)=$\ $h^{\beta }\overline{q}$, we complete the
proof. Notice that $\widehat{q}_{h}(t)$depends on $\varepsilon $, and $\psi $%
, while $\widehat{R}$ depends only on $\varepsilon $.
\end{proof}

\begin{proposition}
Let $V\in L_{loc}^{\infty }$. For every $\varepsilon >0$, there exists $%
\widehat{R}>0$\ and $h_{0}>0$ such that, for any $\psi (t,x)$ that solves (%
\ref{ph}), with initial data $\psi (0,x)$ $\in $\ $B_{K,h}$, where $h<$\ $%
h_{0}$, and for any $t$, there exists $\widehat{q}_{h}(t)\in $\ $%
\mathbb{R}
^{N}$, for which%
\begin{equation}
\frac{1}{\left \Vert \nabla u_{h}(t,x)\right \Vert _{L^{2}}^{2}}\int
\nolimits_{%
\mathbb{R}
^{N}\backslash B(\widehat{q}_{h}(t),h^{\beta }\widehat{R})}\left \vert
\nabla u_{h}(t,x)\right \vert ^{2}dx<\varepsilon \text{.}  \label{p11}
\end{equation}
\end{proposition}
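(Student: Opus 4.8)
The plan is to reduce Proposition 31 to the concentration Lemma 30 by using the conservation laws to check that any solution of (\ref{ph}) issuing from $B_{K,h}$ satisfies the hypothesis of that lemma once $h$ is small; the scheme is the $H^{1}$ counterpart of the one used for Proposition 23. Let $\delta=\delta(\varepsilon)$ and $\widehat R=\widehat R(\varepsilon)$ be the constants furnished by Lemma 30 (equivalently by Lemma 29), write $\psi(t,x)=u_{h}(t,x)e^{\frac{i}{h}s_{h}(t,x)}$ with $u_{h}=|\psi|$, and set $v_{t}(\xi):=u_{h}(t,h^{\beta}\xi)=|\psi(t,h^{\beta}\xi)|$. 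I claim it is enough to show that, for $h<h_{0}(\varepsilon)$, one has $v_{t}\in J^{m+\delta}\cap S_{\sigma}$ for every $t\ge 0$. Indeed, Lemma 30 then applies, and its proof in fact produces the intermediate inequality
\begin{equation*}
\frac{\int_{\mathbb{R}^{N}\setminus B(h^{\beta}\overline q(v_{t}),\,h^{\beta}\widehat R)}|\nabla u_{h}(t,x)|^{2}\,dx}{\int_{\mathbb{R}^{N}}|\nabla u_{h}(t,x)|^{2}\,dx}<\varepsilon
\end{equation*}
(the one obtained by change of variables, before it is weakened to a ball of radius $\widehat R$ using $h<1$), which is exactly (\ref{p11}) with $\widehat q_{h}(t):=h^{\beta}\overline q(v_{t})$.

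So everything reduces to the two conservation laws. For the constraint: since $u_{h}(0,x)=(U+w)\big(\tfrac{x-q}{h^{\beta}}\big)$ with $\|U+w\|_{L^{2}}=\sigma$, a change of variables gives $\|u_{h}(0,\cdot)\|_{L^{2}}^{2}=h^{N\beta}\sigma^{2}$, so conservation of the $L^{2}$ norm forces $\|v_{t}\|_{L^{2}}^{2}=h^{-N\beta}\|u_{h}(t,\cdot)\|_{L^{2}}^{2}=\sigma^{2}$ for all $t$, i.e. $v_{t}\in S_{\sigma}$. For the energy: in polar form the conserved energy of a solution of (\ref{ph}) is
\begin{equation*}
E(\psi)=\widetilde J_{h}(u_{h})+\tfrac12\int_{\mathbb{R}^{N}}u_{h}^{2}|\nabla s_{h}|^{2}\,dx+\int_{\mathbb{R}^{N}}Vu_{h}^{2}\,dx ,
\end{equation*}
whence, using $V\ge 0$ and the rescaling identity $\widetilde J_{h}(u_{h}(t,\cdot))=h^{N\beta-\alpha}J(v_{t})$,
\begin{equation*}
h^{N\beta-\alpha}J(v_{t})=\widetilde J_{h}(u_{h}(t,\cdot))\le E(\psi(t,\cdot))=E(\psi(0,\cdot)) .
\end{equation*}
It remains to bound $E(\psi(0,\cdot))$ with the three numerical conditions defining $B_{K,h}$ in (\ref{p9}): the internal part is $\widetilde J_{h}(u_{h}(0,\cdot))=h^{N\beta-\alpha}J(U+w)$, and a Mean Value estimate for the $C^{1}$ functional $J$, bounding $|J'(U+\eta w)[w]|$ by $C'\|w\|_{H^{1}}\le C'Kh^{\alpha}$ in the manner of (\ref{48}) and (\ref{49}), gives $J(U+w)\le m+C'Kh^{\alpha}$ (recall $J(U)=m$); the phase contribution is $\le\tfrac12\|\nabla s_{h}(0,\cdot)\|_{L^{\infty}}^{2}\|u_{h}(0,\cdot)\|_{L^{2}}^{2}\le\tfrac12K^{2}\sigma^{2}h^{N\beta}$; and the potential contribution is controlled by the last line of (\ref{p9}), the hypothesis $V\in L_{loc}^{\infty}$ ensuring this quantity is finite. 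Dividing the resulting bound by $h^{N\beta-\alpha}$ yields $J(v_{t})\le m+\rho(h)$ with $\rho(h)\to0$ as $h\to0$, uniformly in $t$; choosing $h_{0}(\varepsilon)$ so that $\rho(h)<\delta(\varepsilon)$ for $h<h_{0}$ completes the verification, hence the proof.

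The one point that needs care is this final accounting of the powers of $h$: after dividing the conserved energy by the internal‑energy scale $h^{N\beta-\alpha}$, each of the three corrections — the Mean‑Value one, the phase‑kinetic one and the potential one — must genuinely vanish as $h\to0$, and this is precisely what the exponents appearing in the definition (\ref{p9}) of $B_{K,h}$ are designed to ensure. Everything else is bookkeeping: the two transformations in play, the $h^{\beta}$‑dilation (which converts $\widetilde J_{h}$ and the squared $L^{2}$ norm into the fixed $\mathbb{R}^{N}$ functionals $J$ and $\|\cdot\|_{L^{2}}^{2}$) and the translation by $q$ built into the admissible data, are handled by changes of variables, the translation being harmless because $J$ and the $L^{2}$ norm are translation invariant.
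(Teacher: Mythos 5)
Your proposal is correct and follows essentially the same route as the paper: conservation of mass and energy, the bounds defining $B_{K,h}$ together with the Mean Value estimate $J(U+w)\leq m+C'Kh^{\alpha}$ to place $|\psi(t,h^{\beta}\cdot)|$ in $J^{m+\delta}\cap S_{\sigma}$ for $h$ small, and then an application of Lemma 30. Your observation that one should invoke the intermediate inequality inside the proof of Lemma 30 (the ball of radius $h^{\beta}\widehat{R}$ obtained by the change of variables, before it is enlarged to radius $\widehat{R}$) is in fact slightly more careful than the paper, which cites Lemma 30 directly even though its stated conclusion involves the larger ball.
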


\begin{proof}
By the conservation law, the energy $E_{h}(\psi (t,x))$ is constant with
respect to $t$. Then we have%
\begin{eqnarray*}
E_{h}(\psi (t,x)) &=&E_{h}(\psi (0,x)) \\
&=&\widetilde{J}_{h}(u_{h}(0,x))+\int \nolimits_{%
\mathbb{R}
^{N}}u_{h}^{2}(0,x)\left[ \frac{\left \vert \nabla s_{h}(0,x)\right \vert
^{2}}{2}+V(x)\right] dx \\
&\leq &\widetilde{J}_{h}(u_{h}(0,x))+\frac{K}{2}\sigma ^{2}h^{N\beta
}+Kh^{N\beta } \\
&&h^{N\beta -\alpha }J(U+w)+h^{N\beta }C\text{,}
\end{eqnarray*}%
where $C$ is a suitable constant. By rescaling , and using that $\psi
(0,x)\in B_{K,h}$, and that $\left \Vert w\right \Vert _{H^{1}}<Kh^{\alpha }$
implies $J(U+w)<m+Kh^{\alpha }$ (see the proof of Proposition 23), we obtain%
\begin{eqnarray*}
E_{h}(\psi (t,x)) &=&h^{N\beta -\alpha }J(U+w)+h^{N\beta }C \\
&<&h^{N\beta -\alpha }\left( m+Kh^{\alpha }\right) +h^{N\beta }C \\
&=&h^{N\beta -\alpha }\left( m+Kh^{\alpha }+h^{\alpha }C\right) =h^{N\beta
-\alpha }\left( m+h^{\alpha }C_{1}\right) \text{,}
\end{eqnarray*}%
where we have set $C_{1}=K+C$. Thus 
\begin{eqnarray}
\widetilde{J}_{h}(u_{h}(t,x)) &=&E_{h}(\psi (t,x))-\int \nolimits_{%
\mathbb{R}
^{N}}u_{h}^{2}(t,x)\left[ \frac{\left \vert \nabla s_{h}(t,x)\right \vert
^{2}}{2}+V(x)\right] dx  \notag \\
&<&h^{N\beta -\alpha }\left( m+h^{\alpha }C_{1}\right) \text{,}  \label{p12}
\end{eqnarray}%
since $V\geq 0$. By rescaling the inequality (\ref{p12}), we get%
\begin{equation*}
J\left( u_{h}(t,h^{\beta }x)\right) <m+h^{\alpha }C_{1}\text{.}
\end{equation*}%
So, for $h$ sufficiently small, we may apply Lemma 30, and get the claim.
\end{proof}

\subsection{The case $\Omega $ is bounded}

The case where $\Omega $ is bounded is easily treated, once one makes the
crucial remark that Lemma 29 has to replace Lemma 15 in \cite{Ben-Ghi-Mich},
that it was used in the proof of Lemma 21. The rest of the proofs in the
consequent Lemmas 32 and 33 and of the final Proposition 34 follow precisely
the pattern of the proofs for Lemmas 21, 22, and of Proposition 23,
respectively. For completeness, we give below the precise statements, where
we have assumed for simplicity, as in the $L^{2}$ case, that $\sigma =1$.

\begin{lemma}
For any $\varepsilon >0$, there exist $\delta =\delta (\varepsilon )$, $%
h_{0}=h_{0}(\varepsilon )>0$, and $R=R(\varepsilon )>0$ such that, for any $%
0<h<h_{0}(\varepsilon )$, there is an open ball $B(\widehat{q}_{h},h^{\beta
}R)\subset $ $\Omega $ so that for any $u\in H_{0}^{1}(\Omega )$ with $%
C_{h}(u)=1$, and $J_{h}(u)<m(h,\Omega )+\delta h^{-\alpha }$, 
\begin{equation*}
\frac{\int \nolimits_{\Omega \backslash B(\widehat{q}_{h},h^{\beta
}R)}\left \vert \nabla u\right \vert ^{2}dx}{\int \nolimits_{\Omega }\left
\vert \nabla u\right \vert ^{2}dx}<\varepsilon
\end{equation*}%
to hold.
\end{lemma}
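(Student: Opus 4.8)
\medskip

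The plan is to mimic exactly the contradiction argument used in the proof of Lemma 21, but with the $H^{1}$-gradient concentration provided by Lemma 29 replacing the $L^{2}$-concentration statement (Lemma 15 of \cite{Ben-Ghi-Mich}) that was invoked there. First I would argue by contradiction: fix $\varepsilon>0$ and suppose that for every $r>0$ there are sequences $\delta_n=\delta_n(r)$, $h_n=h_n(r)\to 0^{+}$ and $u_{h_n}\in H_{0}^{1}(\Omega)$ with $C_{h_n}(u_{h_n})=1$ and $J_{h_n}(u_{h_n})<m(h_n,\Omega)+\delta_n h_n^{-\alpha}$, yet for every admissible ball $B(q_r,h_n^{\beta}r)\subset\Omega$ one has
\begin{equation*}
\frac{\int\nolimits_{\Omega\backslash B(q_r,h_n^{\beta}r)}\left\vert\nabla u_{h_n}\right\vert^{2}dx}{\int\nolimits_{\Omega}\left\vert\nabla u_{h_n}\right\vert^{2}dx}\geq\varepsilon .
\end{equation*}

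\medskip

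Next I would pass to the inflated domain $\Omega_{h_n}$ by the rescaling $u_n(x)=u_{h_n}(h_n^{\beta}x)$; by the identities (\ref{42}) and Lemma 16 this gives $C_{\Omega_{h_n}}(u_n)=1$ and $J_{\Omega_{h_n}}(u_n)<m(1,\Omega_{h_n})+\delta_n$, and since the Dirichlet integral ratio is scale invariant the concentration failure becomes
\begin{equation*}
\frac{\int\nolimits_{\Omega_{h_n}\backslash B(h_n^{-\beta}q_r,r)}\left\vert\nabla u_n\right\vert^{2}dx}{\int\nolimits_{\Omega_{h_n}}\left\vert\nabla u_n\right\vert^{2}dx}\geq\varepsilon .
\end{equation*}
Then, applying Lemma 29 with the parameter $\varepsilon$, I obtain $\widehat{R}=\widehat{R}(\varepsilon)>0$ and $\delta=\delta(\varepsilon)>0$ such that any $w\in H^{1}(\mathbb{R}^{N})$ with $\left\Vert w\right\Vert_{L^{2}}=1$ and $J(w)<m(\mathbb{R}^{N})+\delta$ has an $H^{1}$-gradient concentration ball of radius $\widehat{R}$. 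Choosing $r=\widehat{R}$ in the first step, and using Lemma 20 to guarantee that for $n$ large enough $B(\widehat{q},\widehat{R})\subset\Omega_{h_n}$ and $m(1,\Omega_{h_n})+\delta_n(\widehat{R})<m(\mathbb{R}^{N})+\delta$, the zero-extension of $u_n$ to $\mathbb{R}^{N}$ satisfies the hypotheses of Lemma 29; since the extension adds nothing to either gradient integral, this yields a concentration ball for $u_n$ in $\Omega_{h_n}$ of radius $\widehat{R}$, contradicting the displayed inequality once $r=\widehat{R}$.

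\medskip

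The main obstacle — and the reason Lemma 29 had to be established separately — is that the concentration-compactness dichotomy must be ruled out for the \emph{gradient} measure rather than the mass: a priori a near-minimizing sequence could spread its Dirichlet energy over several bumps even while its $L^{2}$ mass stays localized. This is exactly what Lemma 29 handles, via the fact that ground states themselves are $H^{1}$-concentrated (from concentration compactness on $\mathbb{R}^{N}$) together with the estimate (\ref{p3}) that near-minimizers are $H^{1}$-close to a translated ground state. The only point requiring a little care here is making sure that passing to the zero-extension on $\mathbb{R}^{N}$ is compatible with Lemma 29, i.e. that the extended function genuinely lies in $J^{m+\delta}\cap S_{\sigma}$; this follows because extension by zero changes neither $\left\Vert u_n\right\Vert_{L^{2}}$ nor $\int\left\vert\nabla u_n\right\vert^{2}$ nor $\int W(u_n)$, so both the constraint and the energy bound survive verbatim, and the scale-invariance of the ratio closes the argument as above.
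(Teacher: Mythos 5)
Your proposal is correct and follows essentially the same route the paper takes: the paper's own treatment of this lemma consists precisely of repeating the contradiction-and-rescaling argument of Lemma 21 with the gradient ratio in place of the mass and with Lemma 29 substituted for Lemma 15 of the cited reference, which is exactly what you do. The additional observations you make (scale invariance of the Dirichlet ratio and that extension by zero preserves the constraint and the energy bound) are the right points to check and are consistent with the paper's argument.
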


\begin{lemma}
For any $\varepsilon >0$, there exist $\delta =\delta (\varepsilon )$, $%
h_{0}=h_{0}(\varepsilon )>0$, and $R=R(\varepsilon )>0$ such that, for any $%
0<h<h_{0}(\varepsilon )$, there is an open ball $B(\widehat{q}_{h},h^{\beta
}R)\subset $ $\Omega $ so that for a solution $\psi (t,x)$ of (\ref{h}) with 
$C_{h}(\left \vert \psi (t,x)\right \vert )=1$, and $J_{h}(\left \vert \psi
(t,x)\right \vert )<m(h,\Omega )+\delta h^{-\alpha }$, for each $t\in 
\mathbb{R}
_{0}^{+}$, 
\begin{equation*}
\frac{\int \nolimits_{\Omega \backslash B(\widehat{q}_{h},h^{\beta
}R)}\left \vert \nabla u(t,x)\right \vert ^{2}dx}{\int \nolimits_{\Omega
}\left \vert \nabla u(t,x)\right \vert ^{2}dx}<\varepsilon
\end{equation*}%
to hold, where $u(t,x)=\left \vert \psi (t,x)\right \vert $.
\end{lemma}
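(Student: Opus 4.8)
The plan is to mirror exactly the argument used for Lemma 22 (which itself was the $L^2$-flavoured version of Lemma 21), only replacing the $L^2$-concentration statement of Lemma 21 with the $H^1$-gradient-concentration statement of Lemma 32. First I would fix $\varepsilon>0$ and invoke Lemma 32 to obtain the corresponding $\delta=\delta(\varepsilon)$, $h_0=h_0(\varepsilon)$, $R=R(\varepsilon)$ together with, for each admissible $u\in H_0^1(\Omega)$, the associated ball $B(\widehat q_h,h^{\beta}R)\subset\Omega$. The point is that Lemma 32 applies not to an abstract function but, at each frozen time $t$, to the modulus $u(t,x)=|\psi(t,x)|$ of a solution of (\ref{h}): so the only thing to check is that this modulus meets the hypotheses of Lemma 32, namely $C_h(|\psi(t,x)|)=1$ and $J_h(|\psi(t,x)|)<m(h,\Omega)+\delta h^{-\alpha}$, uniformly in $t\in\mathbb{R}_0^+$.

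The first hypothesis, $C_h(|\psi(t,x)|)=1$ for all $t$, is the content of the conservation of the $L^2$ norm (\ref{26}): since $C_h$ is just a rescaled squared $L^2$ norm and $C_h(|\psi(0,x)|)=1$ is assumed, the mass conservation for (\ref{h}) propagates it to every $t$. The second hypothesis is where the energy conservation (\ref{27}) enters, exactly as in Proposition 23: from (\ref{a}) one has
\begin{equation*}
E(\psi(t,x))=h^{N\beta}J_h(u(t,x))+\int_{\Omega}u^2(t,x)\left[\frac{|\nabla s_h(t,x)|^2}{2}+V(x)\right]dx,
\end{equation*}
and since $V\geq0$ (Condition 13) the last integral is nonnegative, so $h^{N\beta}J_h(u(t,x))\leq E(\psi(t,x))=E(\psi(0,x))$. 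One then bounds $E(\psi(0,x))$ from above by $h^{N\beta}(m(h,\Omega)+\delta h^{-\alpha})$ exactly as in the displayed chain of inequalities in the proof of Proposition 23 — that is, by expressing $E(\psi(0,x))$ via $J_h(u_h(0,x))$, using the admissibility bounds on $\|\nabla s_h(0,x)\|_{L^\infty}$ and $\int_\Omega V u_h^2(0,x)\,dx$, and controlling $J_h(U+w)$ by $m(h,\Omega)+C'Kh^{\alpha}$ through a Mean Value Theorem argument and Condition 2, finally absorbing the lower-order powers of $h$ into $\delta$ by shrinking $h_0$.

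Once both hypotheses of Lemma 32 are verified for $u(t,\cdot)=|\psi(t,\cdot)|$ at each $t$, Lemma 32 yields the ball $B(\widehat q_h,h^\beta R)\subset\Omega$ and the quotient estimate $\big(\int_{\Omega\setminus B(\widehat q_h,h^\beta R)}|\nabla u(t,x)|^2\,dx\big)\big/\big(\int_\Omega|\nabla u(t,x)|^2\,dx\big)<\varepsilon$, which is precisely the conclusion of Lemma 33. I expect no genuine obstacle here — the statement is essentially a restatement of Lemma 32 along a solution trajectory — and the only point requiring a little care is the uniformity in $t$, which is guaranteed because the two conservation laws (\ref{26}) and (\ref{27}) give bounds that do not deteriorate with time; the choice of $h_0$ depends only on $\varepsilon$ and $K$, not on $t$. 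Since the paper explicitly says the proofs of Lemmas 32 and 33 follow the pattern of Lemmas 21 and 22, this reduction is exactly what is intended.
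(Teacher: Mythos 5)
Your reduction is exactly what the paper intends: Lemma 33 is obtained by applying Lemma 32 at each frozen time $t$ to $u(t,\cdot)=|\psi(t,\cdot)|$, the uniformity in $t$ coming for free because the ball in Lemma 32 is chosen independently of the admissible function; the paper gives no separate argument, stating only that Lemmas 32 and 33 follow the pattern of Lemmas 21 and 22. The one remark is that your verification of $J_h(|\psi(t,x)|)<m(h,\Omega)+\delta h^{-\alpha}$ via energy conservation and the admissibility class $B_h^K$ is superfluous here, since that bound is already a hypothesis of Lemma 33 and its derivation from the initial data is precisely the content of Proposition 34; the extra step is correct but belongs to the proposition rather than the lemma.
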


\begin{proposition}
Given $\varepsilon >0$, there exists $h_{0}=h_{0}(\varepsilon )>0$, and $%
R=R(\varepsilon )>0$ such that, for any $0<h<h_{0}(\varepsilon )$, there is
an open ball $B(\widehat{q}_{h},h^{\beta }R)\subset $ $\Omega $ so that for
a solution $\psi (t,x)$ of (\ref{h}) with $C_{h}(\left \vert \psi
(t,x)\right \vert )=1$, and with initial data $\psi (0,x)\in B_{h}^{K}$,
where $K$ is a positive fixed number, it holds%
\begin{equation*}
\frac{\int \nolimits_{\Omega \backslash B(\widehat{q}_{h},h^{\beta
}R)}\left \vert \nabla u(t,x)\right \vert ^{2}dx}{\int \nolimits_{\Omega
}\left \vert \nabla u(t,x)\right \vert ^{2}dx}<\varepsilon \text{,}
\end{equation*}%
for any $t\in 
\mathbb{R}
_{0}^{+}$, where $u(t,x)=\left \vert \psi (t,x)\right \vert $.
\end{proposition}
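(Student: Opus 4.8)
The plan is to reproduce, essentially word for word, the proof of Proposition 23, with Lemma 33 (the bounded-domain $H^{1}$ localization for the moduli of solutions of (\ref{h})) taking over the role that Lemma 22 had there. Lemma 33, via Lemma 32, already contains the only structural novelty of the $H^{1}$ setting, so the proof of Proposition 34 consists of a single energy estimate that feeds the evolved modulus $|\psi(t,\cdot)|$ into the hypotheses of Lemma 33. Throughout, under Conditions 1, 2, 12, 13 problem (\ref{h}) is globally well posed with energy and mass conserved (Remark 14), and we keep the normalisation $\sigma=1$.

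The first step is conservation of energy: $E(\psi(t,x))=E(\psi(0,x))$ for all $t\ge 0$. Writing the initial datum in the polar form prescribed by (\ref{47}), $\psi(0,x)=u_{h}(0,x)e^{\frac{i}{h}s_{h}(0,x)}$ with $u_{h}(0,x)=(U+w)(x)$ and $U$ a ground state of (\ref{41}) (so $J_{h}(U)=m(h,\Omega)$), the decomposition underlying (\ref{a}) gives
\begin{equation*}
E(\psi(0,x))=h^{N\beta}J_{h}(U+w)+\int_{\Omega}u_{h}^{2}(0,x)\left[\frac{|\nabla s_{h}(0,x)|^{2}}{2}+V(x)\right]dx .
\end{equation*}
The three bounds defining $B_{h}^{K}$ (the $L^{\infty}$ bound on $\nabla s_{h}(0,x)$, the bound on $\int_{\Omega}Vu_{h}^{2}(0,x)\,dx$, and $\|w\|_{H_{0}^{1}(\Omega)}<Kh^{\alpha}$), together with the Mean Value Theorem estimate $J_{h}(U+w)\le m(h,\Omega)+C'Kh^{\alpha}$ carried out exactly as in Proposition 23 (using that $J_{h}$ is $C^{1}$, Cauchy--Schwarz on the gradient term and Condition 2 plus the Sobolev embedding $H_{0}^{1}\hookrightarrow L^{p}$ on the $W'$ term; the mild $h$-dependence of the ground state $U$ is harmless, since $\|\nabla U\|_{L^{2}(\Omega)}$ stays bounded as $h\to 0^{+}$, exactly as used there), yield
\begin{equation*}
E(\psi(0,x))\le h^{N\beta}\bigl[m(h,\Omega)+h^{-\alpha}\eta(h)\bigr],\qquad \eta(h)\to 0^{+}\ \text{as}\ h\to 0^{+}.
\end{equation*}

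Next, since $V\ge 0$ (Condition 13) and $|\nabla s_{h}(t,x)|^{2}\ge 0$, the same decomposition at time $t$ gives $h^{N\beta}J_{h}(|\psi(t,x)|)\le E(\psi(t,x))=E(\psi(0,x))$, hence
\begin{equation*}
J_{h}(|\psi(t,x)|)\le m(h,\Omega)+h^{-\alpha}\eta(h)<m(h,\Omega)+\delta(\varepsilon)h^{-\alpha}
\end{equation*}
for every $t\ge 0$, once $h<h_{0}(\varepsilon)$ is chosen so small that $\eta(h_{0})\le\delta(\varepsilon)$ and $h_{0}$ does not exceed the threshold of Lemma 33, where $\delta(\varepsilon)$ is the constant supplied by Lemma 33. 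Since moreover $C_{h}(|\psi(t,x)|)=1$ (by hypothesis, or equivalently by mass conservation starting from $C_{h}(U+w)=1$), Lemma 33 applied at each $t$ to $u(t,\cdot)=|\psi(t,x)|$ produces the ball $B(\widehat q_{h},h^{\beta}R)\subset\Omega$, with $R=R(\varepsilon)$ independent of $t$, for which the asserted quotient is $<\varepsilon$. This concludes the argument.

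Within Proposition 34 itself I do not expect any genuine obstacle: once Lemma 33 is in hand the whole proof is the elementary energy bookkeeping above, identical in structure to Proposition 23. The one place that requires real care — and the only respect in which the bounded-domain $H^{1}$ story differs from the $L^{2}$ one of Section 3 — is upstream, inside Lemma 32, where one must substitute the $H^{1}$-concentration of near-minimizers on $\mathbb{R}^{N}$ (Lemma 29) for the $L^{2}$-concentration (Lemma 15 in \cite{Ben-Ghi-Mich}) that was used in Lemma 21: after rescaling $u$ with $C_{h}(u)=1$, $J_{h}(u)<m(h,\Omega)+\delta h^{-\alpha}$ to the inflated domain $\Omega_{h}=\{x:h^{\beta}x\in\Omega\}$ and extending by zero, Lemma 20 (using $0\in\Omega$) guarantees both $B(\widehat q,R')\subset\Omega_{h}$ and $m(1,\Omega_{h})+\delta<m(\mathbb{R}^{N})+\delta'$ for $h$ small, so the extended function lies in $J^{m+\delta'}\cap S_{1}$, Lemma 29 applies, and the resulting ball is unrescaled back into $\Omega$. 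This is already absorbed into the statements of Lemmas 32 and 33, so in Proposition 34 it is merely invoked. A minor caveat, as in Proposition 23, is that the centre $\widehat q_{h}$ may depend on $t$ through $|\psi(t,x)|$ while the radius $h^{\beta}R$ does not.
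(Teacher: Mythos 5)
Your proof is correct and takes essentially the same approach as the paper: the authors give no separate argument for Proposition 34, stating only that it follows precisely the pattern of Proposition 23 with Lemma 33 (and, upstream, Lemma 29 replacing Lemma 15 of \cite{Ben-Ghi-Mich} inside Lemma 32) playing the role of Lemma 22, which is exactly the energy-conservation bookkeeping you reproduce. No discrepancies to report.
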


\section{\protect \bigskip Appendix}

\  \ In order to gain some control over the dynamics of the problem, that is,
to try to formulate Newton's equation describing the motion of the
barycenter for a fixed value of $h$, one needs to express suitably $\overset{%
..}{q}(t)$. To this end, a further assumption on $W$ is made, namely that $%
W(0)=0$. For the sake of simplicity, we fix $h=1$, $\alpha =1$ in what
follows, the general case being straightforward. Also, we suppose that a
solution $\psi (t,x)$ is sufficiently smooth in order to make the
integration by parts meaningful. Given this, the general case can be proved
with minor technical efforts. Finally, we use the Einstein convention on the
summation indices.

We will use the Lagrangian formalism. Equation (\ref{h}) is the
Euler-Lagrange equation relative to the following Lagrangian density $%
\mathcal{L}$: 
\begin{equation*}
\mathcal{L}=\mathrm{Re}(i\overline{\psi }\partial _{t}\psi )-\frac{1}{2}\left
\vert \nabla \psi \right \vert ^{2}-W(|\psi |)-V(x)\left \vert \psi \right
\vert ^{2}
\end{equation*}

By Noether's theorem, there are continuity equations related to $\mathcal{L}$%
, which we will use to derive an equation for the motion. In particular, we
are interested in the following continuity equations:

\begin{equation}
\frac{d}{dt}|\psi (t,x)|^{2}=-\nabla \cdot \mathrm{Im}\left( \overline{\psi }%
\nabla \psi )\right)  \label{C}
\end{equation}%
and 
\begin{equation}
\frac{d}{dt}\mathrm{Im}\left( \overline{\psi }\nabla \psi )\right) =-|\psi
|^{2}\nabla V-\nabla \cdot T\text{,}  \label{T}
\end{equation}%
where $T$ is the so called energy stress tensor and has the form 
\begin{equation*}
T_{jk}=\mathrm{Re}\left( \partial _{x_{j}}\psi \partial _{x_{k}}\overline{\psi 
}\right) -\delta _{jk}\left[ \mathrm{Re}\left( \frac{1}{2}\overline{\psi }%
\Delta \psi \right) +\frac{1}{2}\left \vert \nabla \psi \right \vert ^{2}-%
\frac{1}{2}W^{\prime }\left( |\psi |\right) \left \vert \psi \right \vert
+W(|\psi |)\right]
\end{equation*}%
For an introduction to the Lagrangian formalism for equation (\ref{h}) and
continuity equations we refer to \cite{Benci,Ben-Ghi-Mich-arma,Gel-Fom}.

In the light of equation \ref{C}, and by divergence theorem, one has for $%
j=1,...,N$, 
\begin{align*}
\overset{.}{q}_{j}(t)& =\frac{d}{dt}\int_{\Omega }x_{j}\left \vert \psi
(t,x)\right \vert ^{2}dx=\int_{\Omega }x_{j}\partial _{t}(|\psi |^{2})dx \\
& =-\int_{\Omega }x_{j}\nabla \cdot \mathrm{Im}\left( \overline{\psi }\nabla
\psi \right) dx \\
& =-\int_{\Omega }\nabla \cdot \left[ x_{j}\mathrm{Im}\left( \overline{\psi }%
\nabla \psi \right) \right] +\int_{\Omega }\mathrm{Im}\left( \overline{\psi }%
\partial _{x_{j}}\psi \right) \\
& =\int_{\Omega }\mathrm{Im}\left( \overline{\psi }\partial _{x_{j}}\psi
\right) \text{,}
\end{align*}%
since $\psi (t,x)=0$ on $\partial \Omega $. Thus we have the \textit{%
momentum law} 
\begin{equation}
\overset{.}{q}(t)=\mathrm{Im}\left( \int_{\Omega }\overline{\psi }\nabla \psi
dx\right) \text{.}  \label{50}
\end{equation}

For the second derivative of the center of mass, we have, by (\ref{T}) and
by divergence theorem, 
\begin{align*}
\overset{..}{q}_{j}(t)=& \frac{d}{dt}\int_{\Omega }\mathrm{Im}\left( \overline{%
\psi }\partial _{x_{j}}\psi dx\right) =-\int_{\Omega }\partial
_{x_{k}}T_{jk}(t,x)dx-\int_{\Omega }|\psi (t,x)|^{2}\partial _{x_{j}}V(x)dx
\\
=& \int_{\partial \Omega }T_{jk}(t,x)\cdot n_{k}d\sigma -\int_{\Omega }|\psi
(t,x)|^{2}\partial _{x_{j}}V(x)dx:=I_{1}+I_{2}\text{,}
\end{align*}%
$n$ being the inward normal to $\partial \Omega $.

Let us use the polar form $\psi (t,x)=u(t,x)e^{is(t,x)}$. Then 
\begin{equation*}
I_{2}=-\int_{\Omega }u^{2}\partial _{x_{j}}V(x)dx\text{.}
\end{equation*}%
This appears to be a force term depending on the potential $V$. This, when
the concentration parameter $h\rightarrow 0$, gives us the Newtonian law for
the motion of a particle (see \cite{Ben-Ghi-Mich}, where this approach is
used in the whole space $\mathbb{R}^{N}$).

Since $u=0$ on the boundary (and since $W(0)=0$), the expression of $T$ is
simplified and the term $I_{1}$ becomes

\begin{align*}
I_{1}=& \int_{\partial \Omega }T_{jk}n_{k}d\sigma =\int_{\partial \Omega
}\left( \partial _{x_{j}}u\  \partial _{x_{k}}u-\frac{1}{4}\delta _{jk}\Delta
\left( u^{2}\right) \right) n_{k}d\sigma \\
=& \int_{\partial \Omega }\left( \partial _{x_{j}}u\  \partial _{x_{k}}u-%
\frac{1}{2}\delta _{jk}|\nabla u|^{2}\right) n_{k}d\sigma \text{.}
\end{align*}%
Again, because $u=0$ on the boundary, by implicit function theorem, we have
that $\nabla u$ is orthogonal to $\partial \Omega $. In addition, we have by
defintion $u=|\psi |\geq 0$, so whenever $\nabla u\neq 0$, the inward
pointing normal vector can be written as $n={\frac{\nabla u}{|\nabla u|}}$.
Thus 
\begin{align*}
I_{1}=& \int_{\partial \Omega }\left( \partial _{x_{j}}u\  \partial _{x_{k}}u-%
\frac{1}{2}\delta _{jk}|\nabla u|^{2}\right) \frac{\partial _{x_{k}}u}{%
|\nabla u|}d\sigma \\
=& \int_{\partial \Omega }(\partial _{x_{j}}u|\nabla u|-\frac{1}{2}\partial
_{x_{j}}u|\nabla u|)d\sigma =\frac{1}{2}\int_{\partial \Omega }\partial
_{x_{j}}u|\nabla u|d\sigma = \\
=& \frac{1}{2}\int_{\partial \Omega }|\nabla u|^{2}n_{j}d\sigma .
\end{align*}

Concluding, we have 
\begin{equation}
\overset{..}{q}(t)=-\int_{\Omega }u^{2}\nabla V(x)dx+\frac{1}{2}%
\int_{\partial \Omega }|\nabla u|^{2}nd\sigma \text{.}  \label{59}
\end{equation}%
In the case of a bounded domain with Dirichlet boundary condition, it
appears an extra term, which represents the centripetal force.
Unfortunately, there are some obvious computational challenges concerning
the last integral of (\ref{59}), and we cannot give a simple expression of
this term, when $h\rightarrow 0$. As it was said in the Introduction, these
challenges call for further work on the dynamics of the solution of (\ref{h}%
).

\textbf{Acknowledgement }M. Magiropoulos wishes to thank G. M. Kavoulakis
and J. Smyrnakis for many helpful discussions.


\begin{thebibliography}{99}
\bibitem{A-B-C} A. Ambrosetti, M. Badiale, S. Cingolani, \textit{%
Semiclassical states of } \textit{nonlinear Schr\"{o}dinger equations},
Arch. Rational Mech. Anal. 140 (1997), no. 3, 285-300

\bibitem{Am-Ma-Se} A. Ambrosetti, A. Malchiodi, S. Secchi, Multiplicity
results for some \textit{nonlinear Schr\"{o}dinger equations with potentials}%
, Arch. Rational Mech. Anal. 159 (2001), no. 3, 253-271

\bibitem{Bel-Ben-Bon-Sin} J. Bellazzini, V. Benci, C. Bonanno, E. Sinibaldi, 
\textit{Hylomorphic solitons in the \ nonlinear Klein-Gordon equation}, Dyn.
Partial Differ. Eq. 6 (2009), 311-334

\bibitem{Bel-B-G-M} J. Bellazzini, V. Benci, M. Ghimenti, A.M. Micheletti, 
\textit{On the existence of the fundamental eigenvalue of an elliptic
problem in }$\mathbb{R}^{N}$, Adv. Nonlinear Stud., 7 (2007), 439-458

\bibitem{Benci} V. Benci, \textit{Hylomorphic solitons}, Milan J. Math. 77
(2009), 271-332

\bibitem{B-G-M} V. Benci, M. Ghimenti, A.M. Micheletti, \textit{The
Nonlinear Schroedinger Equation: Existence, Stability and Dynamics of
Solitons}, Progress in Variational Methods 11-29, Nankai Ser. Pure Appl.
Math. Theoret. Phys., 7, World Sci. Publ., Hackensack, NJ, 2011

\bibitem{Ben-Ghi-Mich} V. Benci, M. Ghimenti, A.M. Micheletti, \textit{The
nonlinear Schroedinger equation: Solitons dynamics}, J. Differential
Equations, 249 (2010), 3312-3341

\bibitem{Ben-Ghi-Mich-arma} V. Benci, M. Ghimenti, A.M. Micheletti, \textit{%
On the Dynamics of solitons in the nonlinear Schroedinger equation}, Archive
for Rational Mechanics and Analysis 205 (2012), 467-492.

\bibitem{Ben-Gri-Mich} V. Benci, C.R. Grisanti, A.M. Micheletti, \textit{%
Existence and non existence of the ground state solution for the nonlinear
Schroedinger equations} \textit{with }$V(\infty )=0$, Topol. Methods
Nonlinear Anal. 26 (2005), no. 2, 203-219

\bibitem{Ca-Da-Nou-Yan} D. Cao, N.E. Dancer, E.S. Noussair, S. Yan, \textit{%
On the existence and profile of multi-peaked solutions to singularly
perturbed semilinear Dirichlet problems },Discrete Cont. Dynam. Systems 2
(1996), no. 2, 221-236

\bibitem{Caz} T. Cazenave, \textit{Semilinear Schr\"{o}dinger Equations},
Couran Lecture Notes in Mathematics, vol.10, Courant Institute of
Mathematical Sciences, New York, 2003

\bibitem{Caz-Lions} T. Cazenave, P.L. Lions, \textit{Orbital Stability of
Standing Waves for Some Nonlinear Schr\"{o}dinger Equations}, Comm. Math.
Phys. 85 (4) (1982), 549-561

\bibitem{Ci-La} S. Cingolani, M. Lazzo, \textit{Multiple semiclassical
standing waves\ for a class of nonlinear Schr\"{o}dinger equations}, Topol.
Methods Nonlinear Anal. 10 (1997), no. 1, 1-13

\bibitem{DP-F} M. del Pino, P.L. Felmer, \textit{Local mountain passes for
semilinear elliptic problems in unbounded domains}, Calc. Var. Partial
Differential Equations 4 (1996), no. 2, 121-137

\bibitem{F-W} A. Floer, A. Weinstein, \textit{Nonspreading wave packets for
the cubic Schr\"{o}dinger equation with a bounded potential}, J. Funct.
Anal. 69 (1986), no. 3, 397-408

\bibitem{Gel-Fom} I.M. Gelfand,S.V. Fomin, Calculus of Variations,
Prentice-Hall, EnglewoodCliffs, N.J.,1963

\bibitem{Gil-Tru} D. Gilbarg and N.S. Trudinger, \textit{Elliptic Partial
Differential Equations of Second Order}. Grundlehren, Vol. 224,
Springer-Verlag, Berlin, 1983

\bibitem{Gi-Ve} J. Ginibre and G. Velo, \textit{On a class of nonlinear Schr%
\"{o}dinger equations}. \textit{II. Scattering theory, general case}, J.
Funct. Anal., 32 (1979), no. 1, 33-71

\bibitem{Ka} T. Kato, \textit{Nonlinear Schr\"{o}dinger equations}, Schr\"{o}%
dinger Operators, Lecture Notes in Phys., vol. 345, Springer, Berlin, 1989,
pp 218-263

\bibitem{Li} Y.Y. Li, \textit{On a singularly\ perturbed elliptic equation},
Adv. Differential Equations 2 (1997), no. 6, 955-980

\bibitem{O} Y-G. Oh, \textit{Existence of semiclassical boundstates of
nonlinear Schr\"{o}dinger equations with potentials of the class }$(V)_{a}$,
Comm. Partial Differential Equations 13 (1988), no. 12, 1499-1519

\bibitem{Ni-Wei} W-M. Ni, J. Wei, \textit{On the location and profile of
spike-layer solutions to singularly perturbed semilinear Dirichlet problems}%
, Comm. Pure Appl. Math. 48\ (1995), no. 7, 731-768

\bibitem{Nor-Tav-Ver} B. Noris, H. Tavares, G. Verzini, \textit{Existence
and orbital stability of the ground states with prescribed mass for the }$L^{2}$-\textit{critical and supercritical NLS on bounded domains, }Analysis
\& PDE, 8 (2015), 1807-1838

\bibitem{Ra} P.H. Rabinowitz, \textit{On a class of nonlinear Schr\"{o}%
dinger equations}, Z. Angew. Math. Phys. 43 (1992), no.2, 270-291

\bibitem{Str} W.A. Strauss, \textit{Existence of solitary waves in higher
dimensions}, Comm. Math. Phys. 55 (1977), 149-162

\bibitem{Wei} J. Wei, \textit{On the construction of single-peaked solutions
to a singularly perturbed semilinear Dirichlet problem}, J. Differential
Equations 129 (1996), no. 2, 315-333
\end{thebibliography}
\end{document}